\newtheorem{theorem}{Theorem}
\newtheorem{lemma}[theorem]{Lemma}
\newtheorem*{lemma*}{Lemma}
\newtheorem*{corollary*}{Corollary}
\newtheorem*{definition*}{Definition}
\newtheorem*{theorem*}{Theorem}
\def\zet{\mathbb{Z}}
\def\ham{\mathbb H_M}
\def\hamp{\mathbb H_M^{+}}
\def\hamm{\mathbb H_M^{-}}
\def\hams{{\mathcal H}_s}
\newcommand{\twoline}[2]{\genfrac{}{}{0pt}{}{#1}{#2}}
\begin{document}

\baselineskip=17pt

\title[ ]{On inverses of discrete rough Hilbert transforms}

\author[M. Paluszynski]{Maciej Paluszynski}
\address{Mathematical Institute\\ Wroc\l aw University\\
pl. Grunwaldzki 2/4\\
50-384 Wroc\l aw, Poland}
\email{mpal@math.uni.wroc.pl}

\author[J. Zienkiewicz]{Jacek Zienkiewicz}
\address{Mathematical Institute\\ Wroc\l aw University\\
pl. Grunwaldzki 2/4\\
50-384 Wroc\l aw, Poland}
\email{zenek@math.uni.wroc.pl}

\date{\today}

\begin{abstract} We describe the structure of the resolvent of the discrete rough truncated Hilbert transform under the critical
exponent. This extends the results obtained in \cite{PZ}
\end{abstract}

\subjclass[2010]{42B25, 11P05}
\keywords{Singular Integral Operators, Hilbert Transform}
\thanks{  }
\maketitle
\section{Introduction and Statement of the Results}
Consider $M\in\mathbb N,\ 1\le\alpha\le1+\frac1{1000},\ 0<\theta<1$ and let $\phi_s$ be appropriate cutoff functions, $\phi_s\in C^\infty_c([\frac12,2])$, uniformly in $s$. We are interested in the truncated Hilbert transform
\begin{align}\label{BT:13}
\ham f(x)&=\sum_{\twoline{M^\theta\le s\le M}{s-\text{dyadic}}}
\sum_{m\ge1}\phi_s\Big(\frac{m^\alpha}{s}\Big)\frac{f(x-[m^\alpha])-f(x+[m^\alpha])}{m}\\
&=\sum_{\twoline{M^\theta\le s\le M}{s-\text{dyadic}}}\hams f(x),\notag
\end{align}
where
\begin{equation*}
\hams f(x)=\sum_{m\ge1}\phi_s\Big(\frac{m^\alpha}{s}\Big)\frac{f(x-[m^\alpha])-f(x+[m^\alpha])}{m},\quad f\in\ell^2(\mathbb Z).
\end{equation*}
We would like to say a few words about our motivation for the study of operators of the above type. The importance of the study of the classical Hilbert transform is evident. It influences many fields like PDEs, $\psi$dOs and function space theory. The widely ranging study of the Hilbert transform, in its more and less exotic variants has led to fundamental questions in many areas. We note that the study of the discrete Hilbert transforms relates to natural questions in ergodic theory.
Operators of the form above and similar have been studied in the past (\cite {B}, \cite{BM}, \cite{C2}, \cite{IW}, \cite{LV}, \cite{MSW} to name only a few). In \cite{PZ} the following theorem has been proved.
\begin{theorem}[\cite{PZ}, Theorem 2.2]\label{PZt}
If $\theta>\alpha-1$ and the operators $(\lambda \mathbb I+\ham)^{-1}$ are bounded on $\ell^2(\mathbb Z)$ uniformly in $M$, then they are all of weak type $(1,1)$, also uniformly in $M$.
\end{theorem}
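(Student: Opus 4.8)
The plan is to reduce the claim to a Calderón--Zygmund estimate on the kernel of the resolvent. Since $\ham$ commutes with translations, so does $(\lambda\mathbb I+\ham)^{-1}$, which is therefore convolution by some $\mathcal K_M\in\ell^2(\zet)$ whose Fourier multiplier is $(\lambda+\widehat{\ham})^{-1}$; the hypothesis says precisely that $\|(\lambda+\widehat{\ham})^{-1}\|_{L^\infty(\mathbb T)}\lesssim 1$ uniformly in $M$, which also gives $\|\mathcal K_M\|_{\ell^2}=\|(\lambda+\widehat{\ham})^{-1}\|_{L^2(\mathbb T)}\lesssim 1$. By the Calderón--Zygmund theorem, once this $\ell^2$ bound is available it suffices to verify the Hörmander-type regularity
\[
\sup_{y\in\zet\setminus\{0\}}\ \sum_{|n|\ge 2|y|}\big|\mathcal K_M(n-y)-\mathcal K_M(n)\big|\ \le\ A
\]
with $A$ independent of $M$. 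Equivalently, after a Calderón--Zygmund decomposition $f=g+\sum_Q b_Q$ of $f\in\ell^1$ at height $\tau$, one only needs $\sum_Q\sum_{n\notin 2Q}|\mathcal K_M*b_Q(n)|\lesssim\|f\|_{\ell^1}$, the good part being controlled by the $\ell^2$ bound alone.

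To get hold of $\mathcal K_M$ one uses the resolvent identity $(\lambda\mathbb I+\ham)^{-1}=\lambda^{-1}\mathbb I-\lambda^{-1}\ham(\lambda\mathbb I+\ham)^{-1}$, applied repeatedly, to obtain for every $N$
\[
(\lambda\mathbb I+\ham)^{-1}=\sum_{k=0}^{N-1}\frac{(-1)^k}{\lambda^{k+1}}\,\ham^{\,k}+\frac{(-1)^N}{\lambda^N}\,\ham^{\,N}(\lambda\mathbb I+\ham)^{-1},
\]
the $k=0$ term being a harmless multiple of the identity. Thus the problem is carried back to: (i) the Hörmander-type behaviour of the kernels of the powers $\ham^{\,k}$, which are sums over $k$-tuples of dyadic scales $s_1,\dots,s_k\in[M^\theta,M]$ of convolutions of the kernels of $\mathcal H_{s_j}$; and (ii) an estimate for the remainder $\ham^{\,N}(\lambda\mathbb I+\ham)^{-1}$, whose kernel is $G_N*\mathcal K_M$ with $G_N$ the kernel of $\ham^{\,N}$ --- here one exploits that high powers of $\ham$ are strongly smoothing ($\|K_{s_1}*\cdots*K_{s_N}\|_{\ell^\infty}\lesssim(\min_j s_j)^{-1/\alpha}\lesssim M^{-\theta/\alpha}$ and there are only $O((\log M)^N)$ tuples, so $\|G_N\|_{\ell^\infty}\to0$), together with $\|\mathcal K_M\|_{\ell^2}\lesssim1$, to absorb the merely $\ell^2$-bounded factor and still reach the bad-part bound above for $N=N(\alpha,\theta)$ large.

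The crux is (i), and it is genuinely arithmetic. The kernel of $\hams$ is supported on $\{\pm[m^\alpha]:m^\alpha\sim s\}$, has total mass $O(1)$, $\ell^\infty$-norm $\sim s^{-1/\alpha}$, and regularity scale $\sim s^{(\alpha-1)/\alpha}$, the gap between consecutive values $[m^\alpha]$. The naive per-scale bound $\sum_n|K_s(n-y)-K_s(n)|\le 2\|K_s\|_{\ell^1}=O(1)$ only yields an unacceptable factor $O(\log M)$ after summing over scales, so one must instead extract real cancellation: this comes from the exponential sums $\sum_m\phi_s(m^\alpha/s)e^{2\pi i[m^\alpha]\xi}/m$, from the arithmetic of the sequence $\{[m^\alpha]\}$, and from the summation over dyadic $s$, and it is precisely the condition $\theta>\alpha-1$ that makes the resulting gain --- a suitable power of $|y|s^{-(\alpha-1)/\alpha}$ --- summable over $M^\theta\le s\le M$ with a bound independent of $M$. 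I expect this uniform-in-$M$ kernel estimate, and its propagation through the powers $\ham^{\,k}$, to be the principal obstacle; granted it, the reductions in the first two paragraphs complete the proof.
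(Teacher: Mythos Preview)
This theorem is not proved in the present paper; it is quoted from \cite{PZ}, and the machinery developed here (the algebra $A_M$, the Christ--type submultiplicative inequality of Theorem~\ref{BT:3}, and the Corollary following it) is an elaboration of the argument in \cite{PZ}. The method there is \emph{not} a finite resolvent expansion with a remainder: one introduces a Banach algebra of operators of the form $\lambda\delta_0+\beta\ham+K^M$ with $K^M$ a CZ kernel, proves a submultiplicative inequality $\|ST\|_{A_M}\lesssim \|S\|_{A_M}\|T\|_{\ell^2\to\ell^2}+\|S\|_{\ell^2\to\ell^2}\|T\|_{A_M}+\epsilon(M)\|S\|_{A_M}\|T\|_{A_M}$, and iterates it to force high powers of a suitable normalization to have small $A_M$ norm; this upgrades $\ell^2$--invertibility to $A_M$--invertibility, and weak $(1,1)$ then follows because every element of $A_M$ is weak $(1,1)$. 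The hypothesis $\theta>\alpha-1$ enters through the fact that $\ham^2$ is a genuine CZ kernel in this range (compare Lemma~\ref{Conv}\,(\ref{BT:16})), so that the algebra closes.

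Your outline identifies the right ingredients for step (i) --- indeed the statement that $\ham^k$, $k\ge2$, is CZ with uniform constants is essentially the engine of the \cite{PZ} argument, and $\theta>\alpha-1$ is used exactly where you say. The genuine gap is step (ii). Knowing only $\|G_N\|_{\ell^\infty}\lesssim(\log M)^N M^{-\theta/\alpha}$ and $\|\mathcal K_M\|_{\ell^2}\lesssim1$ does \emph{not} yield the bad--part estimate for $G_N*\mathcal K_M$: you would need an $\ell^1$ bound on $G_N*\mathcal K_M$ (or on its differences), and no Young--type inequality produces that from an $\ell^\infty$ bound on one factor and an $\ell^2$ bound on the other. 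More conceptually, neither the H\"ormander condition nor weak $(1,1)$ is stable under post--composition with a merely $\ell^2$--bounded convolution operator, so even granting that $\ham^N$ satisfies H\"ormander uniformly, $\ham^N(\lambda\mathbb I+\ham)^{-1}$ need not. This is precisely the obstacle that the Christ--type submultiplicative inequality is designed to circumvent: it lets one control the CZ norm of the full resolvent by an iterative argument that never has to pair a CZ factor against an unstructured $\ell^2$ factor. Your scheme cannot close without an ingredient of that kind.
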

It has also been proved in \cite{PZ} that $\alpha-1$ is a critical value, that is if $\theta<\alpha-1$ the above theorem does not hold. In fact,
the following theorem has been proved.
\begin{theorem}[\cite{PZ}, Theorem 2.3]
Let  $\theta<\alpha-1$. There exists a sequence of functions $\phi_s$  and a compact set $\Gamma
\subset \mathbb C$
 such that the corresponding Hilbert transform \eqref{BT:13} satisfies
$\|(\lambda\,\mathbb I+\ham)^{-1}\|_{\ell^2\to\ell^2}\le C_I$ for all $M$ and $\lambda \in \Gamma$, and for any $C$ the
estimate $\|(\lambda\,\mathbb I+\,\ham)^{-1}\|_{\ell^1 \rightarrow \ell^{1,\infty}}\le C $,  does
not hold uniformly in $\lambda \in \Gamma$ and $M$.
\end{theorem}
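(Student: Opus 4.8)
\medskip

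\noindent\textbf{Proof strategy.}
I would argue entirely on the Fourier side. Writing $\hams$ as convolution with the measure
\[
k_s=\sum_{m\ge1}\phi_s\!\Big(\frac{m^\alpha}{s}\Big)\frac{1}{m}\,\big(\delta_{[m^\alpha]}-\delta_{-[m^\alpha]}\big),
\]
the operator $\ham$ acts on the Fourier side by multiplication with the purely imaginary symbol
\[
\sigma_M(\xi)=-2i\!\!\sum_{\twoline{M^\theta\le s\le M}{s\text{-dyadic}}}\ \sum_{m\ge1}\phi_s\!\Big(\frac{m^\alpha}{s}\Big)\frac{\sin(2\pi[m^\alpha]\xi)}{m},
\]
and, under the stated uniformity of the cutoffs, the standard $\ell^2$ theory (Plancherel and van der Corput bounds) gives $\sup_M\|\ham\|_{\ell^2\to\ell^2}=\sup_M\|\sigma_M\|_\infty=A<\infty$, uniformly over all admissible families $(\phi_s)$. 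Fix a real $\lambda_0>A$ and take $\Gamma=\{\lambda_0\}$ (a short real segment about $\lambda_0$ would serve just as well). Since $\sigma_M$ is imaginary, $|\lambda_0+\sigma_M(\xi)|\ge\lambda_0$, hence $\|(\lambda_0\mathbb I+\ham)^{-1}\|_{\ell^2\to\ell^2}\le\lambda_0^{-1}$ for every $M$ and every $(\phi_s)$: the first assertion costs nothing, and the entire content is to choose the $\phi_s$ so that the second fails. As $\lambda_0>A$, the Neumann expansion $(\lambda_0\mathbb I+\ham)^{-1}=\lambda_0^{-1}\sum_{j\ge0}(-\lambda_0^{-1})^j\ham^{\,j}$ converges in the $\ell^2\to\ell^2$ norm, so the resolvent kernel is $K^{(M)}=\lambda_0^{-1}\sum_{j\ge0}(-\lambda_0^{-1})^j\,k_M^{*j}$ with $k_M=\sum_s k_s$; this expansion is what will carry the weak-type lower bound.

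The substance lies in the construction of $(\phi_s)$ and the use of the hypothesis $\theta<\alpha-1$. In the proof of Theorem~\ref{PZt} (the regime $\theta>\alpha-1$) the dyadic pieces $\hams$ essentially telescope across scales, so that as far as the resolvent is concerned only boundedly many independent scales survive and $K^{(M)}$ is a Calder\'on--Zygmund kernel, hence automatically of weak type $(1,1)$. The threshold $\theta=\alpha-1$ is precisely where $M^{\theta/\alpha}$, the number of summation indices $m$ available at the smallest scale $s=M^\theta$, drops below $M^{(\alpha-1)/\alpha}$, the order of the largest gap between consecutive values $[m^\alpha]$ occurring at the top scale $s=M$; once $\theta<\alpha-1$ the small scales are too short to average out the arithmetic of $m\mapsto[m^\alpha]$ that the large scales see, and the telescoping can be destroyed. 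Concretely I would choose, for each dyadic $s\in[M^\theta,M]$, a cutoff $\phi_s\in C^\infty_c([\tfrac12,2])$ with bounds uniform in $s$ (so that $\|\sigma_M\|_\infty\le A$ still holds) supported on a subinterval chosen so that the indices $m$ feeding $\hams$ lie in a window on which $m\mapsto\{[m^\alpha]\,\xi_s\}$ is coherent at a scale-dependent rational frequency $\xi_s$; a van der Corput / Weyl-sum computation then isolates inside the symbol of $\hams$ a non-negligible bump localised near $\xi_s$ of frequency-width $\sim s^{-1}$. The assignment $s\mapsto\xi_s$ is arranged so that distinct $\xi_s$ are separated by more than the corresponding widths $\sim s^{-1}$, which is exactly what keeps $\|\sigma_M\|_\infty\le A$ --- only $O(1)$ of the bumps are active at any given frequency, consistently with the uniform $\ell^2$ bound --- while the underlying $\log M$ spatial scales now genuinely fail to telescope.

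Granting such a family, I would deduce the failure of the weak-type bound by testing the resolvent on a spread-out datum $f=\sum_\nu\delta_{y_\nu}$, with the points $y_\nu$ chosen --- exploiting the near-arithmetic-progression structure of $\operatorname{supp}k_s$ at the relevant scales --- so that the translated kernels $K^{(M)}(\,\cdot-y_\nu)$ reinforce one another on a large common set $E$. Expanding $(\lambda_0\mathbb I+\ham)^{-1}f=\lambda_0^{-1}f-\lambda_0^{-2}\ham f+\cdots$ and tracking how the $\log M$ non-telescoping bumps of $k_M$ propagate through the (even powers in the) Neumann series, one is led to data $f=f_M$ and thresholds $t=t_M>0$ for which
\[
\frac{t_M\,\big|\{\,x:\ |(\lambda_0\mathbb I+\ham)^{-1}f_M(x)|>t_M\,\}\big|}{\|f_M\|_{\ell^1}}\ \longrightarrow\ \infty,
\]
the blow-up being expected of order $\log M$. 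Since $\Gamma=\{\lambda_0\}$ is compact, this is precisely the failure of uniformity in $\lambda\in\Gamma$ and $M$ asserted in the statement.

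The main obstacle is the middle step: designing the $\phi_s$ so that each $\hams$ carries a genuinely non-telescoping component near $\xi_s$ while (i) keeping uniform smoothness bounds, hence the uniform $\ell^2$ control $\|\sigma_M\|_\infty\le A$, and (ii) making the resulting wave packets mutually coherent across all $\log M$ scales sharply enough to force the lower bound of the previous step --- both resting on $\theta<\alpha-1$ through the length-versus-gap numerology above. The Weyl-sum analysis of $\sum_m\phi_s(m^\alpha/s)\,m^{-1}\sin(2\pi[m^\alpha]\xi)$ on the tuned windows, and the additive-combinatorial choice of the test points $y_\nu$ realizing the simultaneous phase alignment, are where the genuine work sits.
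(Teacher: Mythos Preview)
The statement you are asked to prove is quoted from \cite{PZ} and is not re-proved in the present paper; what the paper does prove is the sharper Theorem~\ref{ISR:2} (failure for a \emph{single} $\lambda$, which is exactly what your choice $\Gamma=\{\lambda_0\}$ would give), and its method is an elaboration of the \cite{PZ} construction. That construction is quite different from your outline. The cutoffs $\phi_s$ are not tuned scale-by-scale to produce Fourier bumps at separated frequencies $\xi_s$; rather, they are chosen so that $\ham$ splits into two packets of scales, $\ham=\hamm+\hamp$, with $\hamm$ supported at scales $\sim M^{\alpha-1}$ and $\hamp$ at scales $\sim M$. The singular object is the cross term $\hamp*\hamm$ inside $\ham^2$: because the scales of $\hamm$ sit below the smoothness scale $M^{(\alpha-1)/\alpha}$ of $\hamp$ (this is the numerology $\theta<\alpha-1$), this convolution violates the CZ block condition $(iii)_s$, and one checks directly (\cite{PZ}, Section~5) that $\ham^2$ is not uniformly of weak type $(1,1)$. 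The paper then shows by a Banach-algebra argument (Theorems~\ref{ISR:2}, \ref{BT:3}, \ref{BT:11}) that the resolvent has the form $\lambda'_M\mathbb I+\beta_M\ham+\gamma_M\ham^2+K^M$ with $\gamma_M$ bounded away from $0$ and $K^M$ a CZ kernel; since $\ham$ and $K^M$ are uniformly of weak type $(1,1)$ but $\gamma_M\ham^2$ is not, the conclusion follows.

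Your proposal, by contrast, has a real gap at precisely the point you identify as ``the main obstacle''. You never exhibit a concrete $\phi_s\in C^\infty_c([\tfrac12,2])$ whose associated symbol $\widehat{k_s}$ carries a controllable bump near a prescribed $\xi_s$: the symbol is an exponential sum over $\sim s^{1/\alpha}$ terms with phases $2\pi[m^\alpha]\xi$, and forcing it to localize while retaining uniform $C^\infty$ bounds on $\phi_s$ is not a routine Weyl-sum manipulation. More importantly, even granting such bumps, your passage from ``$\log M$ non-telescoping scales'' to weak-type blow-up is only heuristic. You do not identify which term in the Neumann series carries the singularity, nor do you give a test datum and a lower bound on a superlevel set. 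In the paper's approach the answer is clean: the singularity sits in $\ham^2$ (specifically in $\hamp*\hamm$), and its failure of weak $(1,1)$ is established by an explicit $\ell^2$ lower bound on its blocks that contradicts $(iii)_s$. Your frequency-separation picture does not reach a comparable concrete obstruction, and as written the argument is a plan rather than a proof.
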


It is the aim of the current note to prove failure of the uniform in $M$ weak type $(1,1)$ estimate
for $(\lambda \mathbb I+\ham)^{-1}$, for any single $\lambda$ satisfying assumption of Theorem
\ref{PZt}. We show that the kernel of $(\lambda \mathbb I+\ham)^{-1}$ has the asymptotic expansion
with the main singular term being the $\ham^2$. Using results from \cite{PZ} Section 5, we know,
that $\ham^2$ do not have a uniform in $M$ weak type (1,1) estimate. The operators $\ham$ do have a
uniform in $M$ weak type $(1,1)$ bound, for any $\theta$. A sketch of the proof of this fact, which
was already mentioned in \cite{PZ}, is included in the Appendix. We thus conclude that the uniform
in $M$ weak type (1,1) estimate for  $(\lambda \mathbb I+\ham)^{-1}$ fails.

Our argument is in fact an elaborated variant of that from \cite{PZ}.
Throughout this work we consider operators $\ham$ of a particular form, similar to these from Theorem 2.3 in \cite{PZ}. We note, however,
that our arguments are flexible and most likely apply to more general $\ham$.

We state necessary definitions. From now on $\theta=\alpha-1-\delta$, where $0<\delta<1$ is small enough, so that $\frac{\alpha-1}{\alpha}+\delta<\alpha-1-\delta$. We put
\begin{equation*}
\mathbb P_M^-=\big[M^{(\alpha-1-\delta)},M^{(\alpha-1)}\big]\cap\mathbb Z,\quad\mathbb P_M^+=\big[M^{(1-\delta)},M\big]\cap\mathbb Z.
\end{equation*}
Finally, let
\begin{equation*}
\ham f(x)=\hamm f(x)+\hamp f(x)=\sum_{\twoline{s\in\mathbb P_M^-}{s-\text{dyad.}}}\hams f(x)+\sum_{\twoline{s\in\mathbb P_M^+}{s-\text{dyad.}}}\hams f(x).
\end{equation*}
This will be the fixed operator for the remainder of this paper. We do not specify the particular functions $\phi_s$ involved, but that should be obvious. Each $\hams$ is called a transform block of scale $s$, and thus $\ham$ is a sum of transform blocks of scales just below $M$ and just below $M^{\alpha-1}$.
Let us adopt some notation from \cite{PZ}. For a dyadic $s$ we say that the kernel $K$ is a ``CZ building block of scale $s$" if, for some constants $D,\,\omega>0$, we have:
\begin{enumerate}[$(i)_s$]
\item  $\sum_x K(x)=0$
\item $\text{supp }K\subset[-s,s]$
\item $\sum_x|K(x)|^2\le\frac{D^2}{s}$
\item $\sum_x|K(x+h)-K(x)|^2\le\frac{D^2}{s}\Big(\frac{|h|}{s}\Big)^{\omega}$
\end{enumerate}
The constant $D$ is particular to a kernel, while $\omega$ will be universal (depending, possibly,
on $\alpha$) throughout this paper. We do not specify it here, requirements imposed on its size
will appear as necessary. Note that a CZ block of scale $s$ is also a CZ block of scale $2s$, with
a larger constant:
\begin{equation*}
D_{2s}=2^{1/2+\gamma}\cdot D_s.
\end{equation*}
When referring to a CZ building block we always assume that its scale is a dyadic integer, so we will often abbreviate
\begin{equation*}
  \sum_{s-\text{dyadic}}K_s\quad\text{as}\quad\sum_sK_s,
\end{equation*}
and similarly.
Occasionally we will use building blocks that will satisfy $(ii)_s,\ (iii)_s$ and $(iv)_s$ but not $(i)_s$. In each such situation we will make clear what the assumptions are.
If $K=\sum_{s-\text{dyad.}}K_s$, and each $K_s$ is a CZ building block of scale $s$, then we call
$K$ a CZ kernel and $\|K\|_{CZ}$ is the maximum of $D$ constants of all blocks $K_s$ (and then the
infimum over all representations of $K$ as a dyadic sum of CZ building blocks). The goal of the
current note is the following (recall, that the operators $\ham$ are of a fixed form, with particular choice of cutoff functions $\phi_s$ described above).
\begin{theorem}\label{ISR:2}
Suppose that for some $\lambda\in\mathbb C$ the operators $(\lambda \mathbb I+\ham)^{-1}$ are defined and bounded on $\ell^2(\mathbb Z)$ uniformly in $M\ge M_0$. Then these operators have the form
\begin{equation*}
\lambda_M'\mathbb I+\beta_M\ham+\gamma_M\ham^2+K^M,
\end{equation*}
where
\begin{equation}\label{ISR:1}
K^M=\sum_{\twoline{s\ge M^{(\alpha-1-\delta)}}{s-\text{dyad.}}}K_s,
\end{equation}
with each $K_s$ a CZ building block of scale $s$ and $\lambda'_M,\beta_M,\gamma_M$ and $\|K^M\|_{CZ}$ bounded uniformly in $M\ge M_1$, with $\gamma_M$ uniformly bounded away from 0, for some $M_1$ depending on $M_0$ and $\lambda$.
\end{theorem}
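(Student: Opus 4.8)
The plan is to read the asserted form off the elementary resolvent identity
\begin{equation*}
(\lambda\mathbb I+\ham)^{-1}=\frac1\lambda\,\mathbb I-\frac1{\lambda^{2}}\,\ham+\frac1{\lambda^{3}}\,\ham^{2}-\frac1{\lambda^{3}}\,\ham^{3}(\lambda\mathbb I+\ham)^{-1},
\end{equation*}
which holds whenever the resolvent exists (multiply on the right by $\lambda\mathbb I+\ham$ and use that $\ham$ commutes with its resolvent). So I would take the coefficients to be the constants $\lambda'_M=\lambda^{-1}$, $\beta_M=-\lambda^{-2}$, $\gamma_M=\lambda^{-3}$ and put $K^M=-\lambda^{-3}\ham^{3}(\lambda\mathbb I+\ham)^{-1}$. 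All operators here are Fourier multipliers on $\ell^{2}(\mathbb Z)$, and the multiplier $m_M$ of $\ham$ vanishes at $\xi=0$ (this is $\sum_xK=0$ summed over the blocks); hence the uniform $\ell^{2}$ bound on the resolvent forces $|\lambda|=|\lambda+m_M(0)|\ge\inf_\xi|\lambda+m_M(\xi)|=\|(\lambda\mathbb I+\ham)^{-1}\|_{\ell^{2}\to\ell^{2}}^{-1}\ge c>0$ uniformly in $M\ge M_0$. Consequently $\gamma_M=\lambda^{-3}$ is a fixed nonzero number, in particular bounded and bounded away from $0$, and the same is trivial for $\lambda'_M$ and $\beta_M$. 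Everything then reduces to showing that $K^M$ is a $CZ$ kernel with $\|K^M\|_{CZ}$ bounded uniformly in $M$ and made up of blocks $K_s$ of scales $s\ge M^{\alpha-1-\delta}$.

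The crux, and the main obstacle, will be the uniform estimate that $\ham^{3}$ is a $CZ$ kernel, with $\|\ham^{3}\|_{CZ}$ bounded uniformly in $M$, built from blocks of scales roughly between $M^{\alpha-1-\delta}$ and $M$. To prove it I would expand $\ham^{3}=(\hamm+\hamp)^{3}$, then each $\hamm,\hamp$ into its dyadic constituents, so that $\ham^{3}$ becomes a sum over dyadic triples $s_1\le s_2\le s_3$ in $\mathbb P_M^-\cup\mathbb P_M^+$ of the products $\mathcal H_{s_1}\mathcal H_{s_2}\mathcal H_{s_3}$. Each such product is supported in $[-Cs_3,Cs_3]$ and sums to $0$, so $(i)$ and $(ii)$ for a building block of scale $\sim s_3$ are automatic, and the work is in $(iii)$ and $(iv)$. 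For the $\ell^{2}$ size $(iii)$ I would pass to the multiplier side and use, for the block multipliers $m_s$, the estimate $|m_s(\xi)|\lesssim\min\bigl(s|\xi|,(s|\xi|)^{-\rho}\bigr)$ with $\rho=\rho(\alpha)>\tfrac12$: since $\alpha$ is so close to $1$, the exponential sums $\sum_m\phi_s(m^\alpha/s)e^{-2\pi i[m^\alpha]\xi}/m$ are nearly geometric on the bulk of the frequency range, where $\rho$ may be taken close to $1$, while the short complementary range is handled by van der Corput's estimates and contributes negligibly because there it carries the favorable factor $(s_1s_2)^{-1}$. Inserting these bounds into $\int|m_{s_1}m_{s_2}m_{s_3}|^{2}\,d\xi$ should exhibit $\mathcal H_{s_1}\mathcal H_{s_2}\mathcal H_{s_3}$ as a building block of scale $s_3$ whose $D$–constant gains a fixed positive power of the ratios $s_1/s_2$ and $s_2/s_3$; this gain is exactly what makes $\sum_{s_1\le s_2\le s_3}D(s_1,s_2,s_3)$ converge — geometrically in each index, uniformly in $M$ — so that the resulting block of scale $s_3$ in $\ham^{3}$ has bounded constant despite the $\sim(\log M)^{3}$ summands. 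For the regularity $(iv)$ I would also need the arithmetic sumsets $\{\pm[m_1^\alpha]\pm[m_2^\alpha]\pm[m_3^\alpha]\}$ to be dense enough to fill up scale $1$; this is where the standing hypothesis $\tfrac{\alpha-1}\alpha+\delta<\alpha-1-\delta$ on the clusters $\mathbb P_M^\pm$ enters (it forces already the mixed products to be filled kernels), the $\ell^{2}$ modulus of continuity then being quantified by the same exponential–sum bounds. Carrying this through uniformly over all scale configurations — the mixed ones, for which one ratio $s_i/s_{i+1}$ is $\le M^{-(2-\alpha-\delta)}$, will be harmless, but the same–cluster products $\hamm^{3}$, $\hamp^{3}$ will require the full strength of the estimates — is an elaboration of the exponential–sum arguments of \cite{PZ}, and is where I expect the real difficulty to lie.

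Finally I would compose $\ham^{3}$ with the resolvent. The multiplier of $K^M$ is $-\lambda^{-3}m_M(\xi)^{3}/(\lambda+m_M(\xi))$, which differs from the multiplier $-\lambda^{-3}m_M^{3}$ of $-\lambda^{-3}\ham^{3}$ only by the bounded factor $\lambda/(\lambda+m_M(\xi))$; since $m_M$ is a trigonometric polynomial of degree $O(M)$ and the denominator is $\ge c$, this factor is smooth with all derivatives $\lesssim_{k,c}M^{k}$, that is, it is convolution with a kernel $r_M$ satisfying $|r_M(n)|\lesssim_{k,c}(M/(1+|n|))^{k}$ for every $k$. Multiplication by a bounded factor does not increase the $\ell^{2}$ mass of a multiplier at any frequency scale, so the $(iii)$ bounds of the previous step are inherited; $(i)$ is preserved because $m_M(0)=0$; and the fast spatial decay of $r_M$ lets one redistribute each dyadic block of $\ham^{3}$ into building blocks of $K^M$ of comparable scale, with the regularity $(iv)$ carried along, and the tail at scales below $M^{\alpha-1-\delta}$ — where $|m_M|^{3}$, hence the whole multiplier, is already $\lesssim M^{-c\delta}$ — reabsorbed into a single block of scale $M^{\alpha-1-\delta}$. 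This composition step is essentially the $CZ$ calculus that underlies the proof of \cite{PZ}, Theorem~2.2. Putting it together with the estimate on $\ham^{3}$ and the opening identity gives the theorem, with $M_1$ any threshold — depending only on $M_0$, $\lambda$ and $\alpha$ — beyond which the exponential–sum asymptotics invoked above are in force.
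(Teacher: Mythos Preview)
Your resolvent identity and the values $\lambda'_M=\lambda^{-1}$, $\beta_M=-\lambda^{-2}$, $\gamma_M=\lambda^{-3}$ are correct, and the reduction to showing that $K^M=-\lambda^{-3}\ham^{3}(\lambda\mathbb I+\ham)^{-1}$ is a CZ kernel of the stated form is legitimate. The claim that $\ham^{3}$ itself is a CZ kernel with blocks of scales between $M^{\alpha-1-\delta}$ and $CM$ is also true (and is established in the paper as part of Lemma~\ref{BT:9}, using Lemma~\ref{Conv} rather than your multiplier estimates).

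The genuine gap is in the composition step. Your only input on the resolvent multiplier $f(\xi)=\lambda/(\lambda+m_M(\xi))$ is the crude derivative bound $|f^{(k)}|\lesssim M^{k}$, which gives $|r_M(n)|\lesssim\min\bigl(1,(M/|n|)^{k}\bigr)$. This decay is vacuous for $|n|\le M$: the kernel $r_M$ is effectively spread over the \emph{full} spatial scale $M$, not over anything smaller. Consequently, if $L_s$ is a block of $\ham^{3}$ at a small scale $s\sim M^{\alpha-1-\delta}$, then $L_s*r_M$ is spread over $[-CM,CM]$ while still carrying $\|L_s*r_M\|_{\ell^2}\sim s^{-1/2}\sim M^{-(\alpha-1-\delta)/2}$. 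That object is not a CZ block at any scale with a bounded constant: at scale $s$ it fails $(ii)_s$, and at scale $M$ it fails $(iii)_M$ by a factor $(M/s)^{1/2}\to\infty$. On the multiplier side the same obstruction appears: $\widehat{L_s}\cdot f$ inherits the frequency localization of $\widehat{L_s}$ at scale $1/s$, but only the smoothness of $f$ at scale $1/M$, so the product is smooth at scale $1/M$ and there is no mechanism to recover spatial decay at scale $s$. Your sentence ``the fast spatial decay of $r_M$ lets one redistribute each dyadic block of $\ham^{3}$ into building blocks of $K^M$ of comparable scale'' is precisely where the argument breaks down.

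The paper avoids this circularity by never composing an unknown CZ object with the bare resolvent. Instead it proves a four-factor submultiplicative inequality for the $A_M$ norm (Theorem~\ref{BT:3}) in which one of the four factors appears only through its $\ell^2\to\ell^2$ norm, and then runs Christ's iteration to force $\|T_M^{4^{N_0}}\|_{A_M}$ arbitrarily small; this bounds $\|(\lambda\mathbb I+\ham)^{-1}\|_{A_M}$ directly (the Corollary to Theorem~\ref{BT:3}), after which Theorem~\ref{BT:11} extracts the lower bound on $|\gamma_M|$. The point is that the $\ell^2$ hypothesis on the resolvent enters through that single $\ell^2$ factor in the submultiplicative inequality, not through any pointwise or smoothness control on its kernel or multiplier; your approach needs the latter, and the hypothesis simply does not supply it.
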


As an immediate corollary we obtain Theorem 2.5 of \cite{PZ}.

This note is a follow-up to \cite{PZ}, and is not entirely self-contained. The knowledge of
\cite{PZ} is often necessary. Also, for a more complete list of references see \cite{PZ}.

\section{Basic Tools}
In what follows we use $\ham$ to denote both the operators as well as their kernels. This should not cause confusion. Let us recall, that CZ building blocks always have indices (which indicate their scales) that are dyadic integers. Similarly as in \cite{PZ} we use suitably defined Banach algebras.
\begin{definition*}\label{ISR:3}
Let $A_M$ be a normed vector space of convolution operators with kernels of the form
\begin{equation}\label{BT:1}
T_M=\lambda\cdot\delta_0+\beta\cdot\ham +\gamma\cdot\ham^2+K^M,
\end{equation}
where  $\ham$ is given by \eqref{BT:13} and $K^M$ is a CZ kernel of the form \eqref{ISR:1}. Let
\begin{equation}\label{BT:2}
\|T_M\|_{A_M}=\inf\{|\lambda|+|\beta|+|\gamma|+\|K^M\|_{CZ}\}
\end{equation}
where the infimum is taken over all representations of $T_M$ in the form \eqref{BT:1}.
\end{definition*}
We state the following two simple lemmas. These lemmas seem to be folklore, we include the proofs for the sake of completeness.
\begin{lemma}\label{CS}
Suppose $K$, $L$ are CZ kernels as defined above, that is each being a dyadic sum of CZ building blocks of scales $\ge \mathcal J$. Then their convolution is also a CZ kernel of the same form. Moreover
\begin{equation*}
  \|K*L\|_{CZ}\le C\|K\|_{CZ}\|L\|_{CZ}.
\end{equation*}
The constant $C$ is independent of $\mathcal J$.
\end{lemma}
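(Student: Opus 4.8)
The plan is to reduce the statement to a pair of estimates for convolutions of single building blocks. Write $K=\sum_{s\ge\mathcal J}K_s$ and $L=\sum_{t\ge\mathcal J}L_t$ with $K_s,L_t$ CZ building blocks of scales $s,t$ respectively. Then $K*L=\sum_{s,t\ge\mathcal J}K_s*L_t$, and by symmetry we may group terms according to $\max(s,t)$: write $K*L=\sum_{u\ge\mathcal J}(K*L)_u$ where $(K*L)_u=\sum_{\max(s,t)=u}K_s*L_t$, and there are at most $O(\log(u/\mathcal J))$... no — better, the key point is that $K_s*L_t$ is supported in $[-2u,2u]$ with $u=\max(s,t)$, so it is (up to a harmless factor $2$ in the scale) a candidate building block of scale $u$. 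The first task is therefore to show that each $K_s*L_t$, for $s\le t$ say, is a CZ building block of scale $t$ with constant controlled by $C\,2^{-\omega'(\,\log_2(t/s)\,)}D_s D_t$ for some $\omega'>0$, i.e. with a geometric gain in $t/s$; summing the geometric series in $s\le t$ then shows that $(K*L)_t$ is a building block of scale $t$ with constant $\lesssim \|K\|_{CZ}\|L\|_{CZ}$, which gives the Lemma (the constant being independent of $\mathcal J$ because every step is).

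So I would verify properties $(i)_t$–$(iv)_t$ for $R:=K_s*L_t$ with $s\le t$. Property $(i)$: $\sum_x R(x)=(\sum_x K_s(x))(\sum_x L_t(x))=0$ since each factor vanishes. Property $(ii)$: $\operatorname{supp}R\subset[-s,s]+[-t,t]\subset[-2t,2t]$, which is the support condition at scale $2t$; absorbing the factor $2$ into the constant via the rescaling rule $D_{2s}=2^{1/2+\gamma}D_s$ noted in the text. Property $(iv)$ (smoothness): for the difference $R(\cdot+h)-R(\cdot)$, move the translation onto the \emph{smoother} factor — since $s\le t$, write $R(x+h)-R(x)=\sum_y K_s(y)\bigl(L_t(x+h-y)-L_t(x-y)\bigr)$, apply Minkowski's (triangle) inequality in $\ell^2_x$ and then Cauchy–Schwarz in $y$ using $|\operatorname{supp}K_s|\lesssim s$ together with $(iv)_t$ for $L_t$; this yields $\|R(\cdot+h)-R(\cdot)\|_{\ell^2}^2\lesssim s\cdot\|K_s\|_{\ell^1}^2\cdot \frac{D^2}{t}(|h|/t)^\omega$. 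Bounding $\|K_s\|_{\ell^1}\le s^{1/2}\|K_s\|_{\ell^2}\le D$ via Cauchy–Schwarz and $(iii)_s$, and comparing with the required bound $\frac{D^2}{t}(|h|/t)^\omega$ at scale $t$, produces the geometric factor $s/t$. Property $(iii)$ ($\ell^2$ size) is the delicate one and is where I expect the main obstacle: the naive Young-type estimate $\|K_s*L_t\|_{\ell^2}\le\|K_s\|_{\ell^1}\|L_t\|_{\ell^2}$ loses, giving $\lesssim D^2 s^{1/2} t^{-1/2}$, which is only $\frac{D^2}{t}\cdot (st)^{1/2}$ — off by a factor $(st)^{1/2}$ from what $(iii)_t$ demands. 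The cancellation condition $(i)_s$ must be used: replacing $K_s$ by $K_s$ and exploiting $\sum K_s=0$, one writes $K_s*L_t(x)=\sum_y K_s(y)\bigl(L_t(x-y)-L_t(x)\bigr)$, and then the same Minkowski/Cauchy–Schwarz argument as for $(iv)$ — but now using the \emph{unpurturbed} smoothness estimate $(iv)_t$ with $|h|=|y|\le s$ — gives $\|K_s*L_t\|_{\ell^2}^2\lesssim s\|K_s\|_{\ell^1}^2\frac{D^2}{t}(s/t)^\omega\lesssim \frac{D^4 s^2}{t}(s/t)^\omega$; absorbing one power of $D$ into the normalization and checking that $(s/t)^{1+\omega}$ beats the target gives exactly the geometric gain $(s/t)^{\omega}$ (for $\omega$ chosen, as the text allows, so that all exponents are positive).

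With the per-block estimate $\|K_s*L_t\|_{\text{block of scale }t}\le C\,(s/t)^{\omega/2}\,D^{(K)}_s D^{(L)}_t$ in hand (for $s\le t$, and symmetrically for $t\le s$), I finish by summing: $(K*L)_t=\sum_{\mathcal J\le s\le t}K_s*L_t+\sum_{\mathcal J\le t<s\,}K_s*L_t$ — wait, the second sum belongs to scale $s$, not $t$ — so more carefully, $(K*L)_u=\sum_{s\le u}K_s*L_u+\sum_{t<u}K_u*L_t$, and each sum is a geometric series $\sum_{j\ge 0}2^{-\omega j/2}$ times $D^{(K)}\cdot D^{(L)}$, hence $(K*L)_u$ is a CZ building block of scale $u$ with $D$-constant $\lesssim \|K\|_{CZ}\|L\|_{CZ}$. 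Summing over dyadic $u\ge\mathcal J$ exhibits $K*L$ as a CZ kernel of the required form with $\|K*L\|_{CZ}\le C\|K\|_{CZ}\|L\|_{CZ}$, the constant $C$ depending only on $\omega$ and hence independent of $\mathcal J$. $\hfill\qed$
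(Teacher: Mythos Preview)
Your approach is genuinely different from the paper's and is in principle workable, but there is a real gap in the verification of property $(iv)_t$ for the individual blocks $R=K_s*L_t$ ($s\le t$).

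The paper does not estimate $K_s*L_t$ one pair at a time. It groups as
\[
K*L=\sum_{s}K_s*\Bigl(\sum_{\mathcal J\le s'\le s}L_{s'}\Bigr)+\sum_{s}L_s*\Bigl(\sum_{\mathcal J\le s'< s}K_{s'}\Bigr),
\]
and for each $s$ bounds the first summand by
\[
\sqrt{2s}\,\|K_s\|_{\ell^2}\cdot\Bigl\|\sum_{s'\le s}L_{s'}\Bigr\|_{\ell^2\to\ell^2}\le C\,\|K\|_{CZ}\,\|L\|_{CZ},
\]
using Lemma~\ref{BT:5} (uniform $\ell^2$--boundedness of truncated CZ operators). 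The same argument applied to $K_s(\cdot+h)-K_s(\cdot)$ gives $(iv)_{2s}$ directly. No per-block geometric gain is needed; the partial sum is swallowed wholesale by the operator norm.

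Your route instead seeks a gain $(s/t)^{\omega/2}$ on each $K_s*L_t$ and then sums a geometric series. This is fine for $(iii)_t$: using $\sum_y K_s(y)=0$ and $(iv)_t$ for $L_t$ one indeed gets
\[
\|K_s*L_t\|_{\ell^2}\le \|K_s\|_{\ell^1}\,\frac{D_t}{\sqrt t}\,(s/t)^{\omega/2}\lesssim \frac{D_sD_t}{\sqrt t}\,(s/t)^{\omega/2}.
\]
But for $(iv)_t$ your claimed extra factor of $s$ is spurious. Writing $R(x+h)-R(x)=\sum_yK_s(y)\bigl(L_t(x+h-y)-L_t(x-y)\bigr)$ and applying Minkowski in $\ell^2_x$ gives only
\[
\|R(\cdot+h)-R(\cdot)\|_{\ell^2}\le \|K_s\|_{\ell^1}\,\|L_t(\cdot+h)-L_t(\cdot)\|_{\ell^2}\le D_sD_t\,t^{-1/2}(|h|/t)^{\omega/2},
\]
with \emph{no} factor of $s/t$. (The ``Cauchy--Schwarz in $y$ using $|\operatorname{supp}K_s|\lesssim s$'' that you invoke produces exactly $\|K_s\|_{\ell^1}\le\sqrt{2s}\,\|K_s\|_{\ell^2}\le\sqrt2\,D_s$, so the $s$ cancels.) Consequently, summing $\sum_{\mathcal J\le s\le u}K_s*L_u$ by the triangle inequality in $(iv)$ yields a $D$-constant of order $\log_2(u/\mathcal J)\,\|K\|_{CZ}\|L\|_{CZ}$, which is not uniform in $u$ or $\mathcal J$.

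The fix, if you wish to keep your strategy, is to interpolate: combine the trivial bound $\|R(\cdot+h)-R(\cdot)\|_{\ell^2}\le 2\|R\|_{\ell^2}\lesssim D_sD_t\,t^{-1/2}(s/t)^{\omega/2}$ (from your correct $(iii)$ estimate) with the $(iv)$ bound above, e.g.\ by taking the geometric mean, to obtain
\[
\|R(\cdot+h)-R(\cdot)\|_{\ell^2}\lesssim \frac{D_sD_t}{\sqrt t}\,(s/t)^{\omega/4}(|h|/t)^{\omega/4}.
\]
This recovers a geometric gain in $s/t$ at the price of halving the H\"older exponent (permitted, since the paper allows $\omega$ to be adjusted). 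With this in hand your geometric-series argument goes through. The paper's route via Lemma~\ref{BT:5} avoids this detour entirely.
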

\begin{proof}
Write
\begin{equation*}
K=\sum_{s\ge \mathcal J}K_s,\qquad L=\sum_{s\ge \mathcal J}L_s,
\end{equation*}
the representations as CZ building blocks. Then
\begin{equation*}
K*L=\sum_{s,s'\ge \mathcal J}K_s*L_{s'}=\sum_{s\ge \mathcal J}K_s*\sum_{\mathcal J\le s'\le s}L_{s'}+\sum_{s\ge \mathcal J}L_s*\sum_{\mathcal J\le s'<s}K_{s'}.
\end{equation*}
Observe that $K_s*\sum_{s'\le s}L_{s'}$ is a CZ building block of scale $2s$ satisfying  $(iii)_{2s}$ with constant $D$ satisfying
\begin{align*}
D&\le\sqrt{2s}\|K_s\|_{\ell^2}\cdot\|\sum_{s'\le s}L_{s'}\|_{\ell^2\to\ell^2}\\
&\le C\|K\|_{CZ}\cdot\|L\|_{\ell^2\to\ell^2}\\
&\le C'\|K\|_{CZ}\cdot\|L\|_{CZ}.
\end{align*}
The next to the last inequality follows from Lemma \ref{BT:5}. The second summand in $K*L$ is treated similarly. The same argument applied to the kernel $K(x+h)-K(x)$ yields the estimate of the constant in $(iv)_s$.
\end{proof}
\begin{lemma}\label{tele}\label{teltel}
Suppose
\begin{equation}\label{no_can_0}
K=\sum_{\mathcal J\le s\le\mathcal M}K_s,\qquad\|K\|_{\ell^2\to\ell^2}\le1,
\end{equation}
where $K_s$ are CZ building blocks of scale $s$, possibly without vanishing means (property $(i)_s$), with constants $D_s\le1$. Then
$K$ can be written as
\begin{equation}\label{no_can_1}
  K=\sum_{\mathcal J\le s}{\tilde K}_s,
\end{equation}
where ${\tilde K}_s$ are CZ building blocks (with vanishing means), and $\|K\|_{CZ}\le C$ for some universal constant $C$.
Moreover,
\begin{equation}\label{no_can_3}
  K=\sum_{\mathcal J\le s\le\mathcal M}{\tilde K}_s+{\tilde {\tilde K}}_{4\mathcal M},
\end{equation}
where ${\tilde {\tilde K}}_{4\mathcal M}$ is a CZ building block, possibly without vanishing means, of scale $4\mathcal M$, with constant bounded by a universal constant $C$.
\end{lemma}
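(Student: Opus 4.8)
The plan is to peel off from every block $K_s$ a fixed profile carrying its mean, leaving a genuine (vanishing-mean) block, and then to telescope the profiles; the sole nonroutine ingredient will be an a~priori bound on the partial sums of the means, and that is precisely where the hypothesis $\|K\|_{\ell^2\to\ell^2}\le1$ enters.

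First I would fix once and for all a bump of scale $s$ with unit mass, e.g.\ $\psi_s(x)=s^{-1}\mathbbm{1}_{[0,s)}(x)$; a direct check shows that $\psi_s$ satisfies $(ii)_s$, $(iii)_s$, $(iv)_s$ (with any $\omega\le1$) though not $(i)_s$, that $\psi_s-\psi_{2s}$ is an honest CZ building block of scale $2s$ with vanishing mean, and that $\psi_r\to0$ in $\ell^2$ as $r\to\infty$. Put $c_s=\sum_xK_s(x)$; by $(ii)_s$, $(iii)_s$ and Cauchy--Schwarz, $|c_s|\le CD_s\le C$. Decompose $K_s=(K_s-c_s\psi_s)+c_s\psi_s$: the first summand is a CZ building block of scale $s$ with vanishing mean and constant $\le D_s+C|c_s|\le C$, so it suffices to rewrite $L:=\sum_{\mathcal J\le s\le\mathcal M}c_s\psi_s$.

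For this I would use summation by parts. With $B_s:=\sum_{\mathcal J\le s'\le s}c_{s'}$ (dyadic $s'$) and $B_{\mathcal J/2}:=0$, reindexing the shifted sum gives
\begin{equation*}
L=\sum_{\mathcal J\le s\le\mathcal M}(B_s-B_{s/2})\psi_s
 =\sum_{\mathcal J\le s\le\mathcal M/2}B_s\,(\psi_s-\psi_{2s})+B_{\mathcal M}\psi_{\mathcal M}.
\end{equation*}
Granting $|B_s|\le C$ (proved below), each $B_s(\psi_s-\psi_{2s})$ is a CZ building block of scale $2s$ with vanishing mean and constant $\le C$, while $B_{\mathcal M}\psi_{\mathcal M}$ is, after enlarging its constant by a universal factor, a CZ building block of scale $4\mathcal M$ possibly without vanishing mean. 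Collecting all blocks of a common scale yields \eqref{no_can_3} with ${\tilde{\tilde K}}_{4\mathcal M}=B_{\mathcal M}\psi_{\mathcal M}$; for \eqref{no_can_1} one instead telescopes this boundary term fully, $B_{\mathcal M}\psi_{\mathcal M}=\sum_{r\ge\mathcal M}B_{\mathcal M}(\psi_r-\psi_{2r})$ (the series converges in $\ell^2$), absorbs it into the sum, and obtains $K=\sum_{\mathcal J\le s}{\tilde K}_s$ with all ${\tilde K}_s$ vanishing-mean CZ building blocks of constant $\le C$, i.e.\ $\|K\|_{CZ}\le C$. All constants are plainly independent of $\mathcal J$ and $\mathcal M$.

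The remaining point, $|B_s|\le C$ uniformly in $\mathcal J,\mathcal M,s$, is the crux, and is the one place the $\ell^2$ bound is used (without it the $B_s$ need not be bounded, e.g.\ for $c_{2^j}\sim j^{-1/2}$). I would argue on the Fourier side. Fix dyadic $s$ and evaluate $\widehat K$ at a frequency $\xi$ comparable to $1/s$, say $\xi=1/(2s)$; since $\|\widehat K\|_\infty=\|K\|_{\ell^2\to\ell^2}\le1$ by \eqref{no_can_0}, we have $|\widehat K(\xi)|\le1$. Split $\widehat K(\xi)=\sum_{s'\le s}\widehat K_{s'}(\xi)+\sum_{s'>s}\widehat K_{s'}(\xi)$. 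For $s'\le s$ one has $s'|\xi|\le\tfrac12$, hence $|\widehat K_{s'}(\xi)-c_{s'}|\le|\xi|\sum_x|x|\,|K_{s'}(x)|\le C|\xi|s'D_{s'}\le Cs'/s$, and summing this geometrically in $s'/s$ shows $\sum_{s'\le s}\widehat K_{s'}(\xi)=B_s+O(1)$. For $s'>s$, translating by a half-period of $e^{ix\xi}$ and invoking the regularity $(iv)_{s'}$ together with Cauchy--Schwarz over the support gives $|\widehat K_{s'}(\xi)|\le CD_{s'}(s/s')^{\omega/2}$, which sums to $O(1)$. Therefore $|B_s|\le|\widehat K(\xi)|+O(1)\le C$, completing the proof.
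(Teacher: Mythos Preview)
Your proof is correct and follows essentially the same approach as the paper: subtract a mean-carrying bump from each $K_s$, telescope, and reduce everything to a uniform bound on the partial mean sums $B_s=\sum_{s'\le s}c_{s'}$. The only substantive difference is in how that bound is obtained---the paper works on the spatial side, testing $K$ against (essentially) $\mathbbm{1}_{4s}*\mathbbm{1}_{2s}$ and invoking $\|K\|_{\ell^2\to\ell^2}\le1$, whereas you work on the Fourier side, evaluating $\widehat K$ at a frequency $\xi\sim1/s$; these are dual versions of the same idea.
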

\begin{proof}

Let $|x|\le 2s$. Denote by  $\mathbbm{1}_{s}$  an indicator function of $(-s,s)$. It is evident by support analysis, that for $j\le s$ we have $\mathbbm{1}_{4s}*K_j(x)=\sum_{y}K_j(y)$.
Consequently, directly from the definition of $K$,
\begin{align}
\Big|\mathbbm{1}_{4s}*K(x)-\sum_{j\le s}\sum_{y}K_j(y)\Big|&\le\sum_{s<j}\sum_{y}|K_j(y)|\mathbbm{1}_{4s}(x-y)\notag\\
&\le \sum_{s<j} \|\mathbbm{1}_{4s}\|_{\ell^2} \|K_j\|_{\ell^2}\label{BT:18}\\
&\le (8s)^{\frac12}\sum_{s<j}\frac{1}{(j)^{\frac12}}\notag\\
&=\frac{8^{\frac12}}{\sqrt2-1}=E\notag
\end{align}
with $E$ being a universal constant.
Hence we obtain
\begin{align}\label{can_nocan_av}
\big|4s\sum_{j\le s}\sum_{y}K_j(y)\big|&\le \big |\langle\mathbbm{1}_{4s}*K,\mathbbm{1}_{2s} \rangle\big | +4Es\notag\\
&\le \sqrt{32}s\|K\|_{\ell^2\rightarrow \ell^2}+4Es\\
&\le Cs\notag
\end{align}
where  $C$ is a universal constant.

Let $\psi\in C_c^\infty$ be supported in $[-2,2]$,
$\psi\equiv1$ on  $[-1,1]$.  Let $\psi_{s}(x)=\psi(\frac{x}{s})$, and let
\begin{equation*}
  c_s=\sum_y \psi_{s}(y),\quad \tilde\psi_{s}(x)=\frac1{c_s}\psi_{s}(x),\text{ for } s\ge\mathcal J,\quad\tilde\psi_{\mathcal J/2}(x)\equiv0.
\end{equation*}
We write the telescoping identity
\begin{equation*}
K(x)=\sum_{\mathcal J\le s}\Big( K_s(x)-\tilde\psi_{s}(x)\sum_{\mathcal J\le j\le s}\sum_{y}K_j(y)+
\tilde\psi_{s/2}(x)\sum_{\mathcal J\le j<s}\sum_{y}K_j(y)\Big)
\end{equation*}
and denote
$$
\tilde K_s(x)=
 K_s(x)-\tilde\psi_{s}(x)\sum_{j\le s}\sum_{y}K_j(y)+
\tilde\psi_{s/2}(x)\sum_{j<s}\sum_{y}K_j(y)
$$
Obviously $\tilde K_s$ is of mean zero.  Hence, $\tilde K_s(x)$ is a $CZ$ block  due to the fact
 that both $K_s$ and $\psi_s$ satisfy axioms $(ii)_s, (iii)_s, (iv)_s$, and  we have estimate \eqref{can_nocan_av}.
Finally, observe that \eqref{no_can_3} follows immediately from \eqref{no_can_1}.
We multiply \eqref{no_can_1} by $\psi_{2\mathcal M}$ and obtain
\begin{align*}
K(x)&=K(x)\psi_{2\mathcal M}(x)\\
&=\sum_{\mathcal J\le s}{\tilde K}_s(x)\psi_{2\mathcal M}(x)\\
&=\sum_{\mathcal J\le s\le\mathcal M}{\tilde K}_s(x)+\sum_{2\mathcal M\le s}{\tilde K}_s(x)\psi_{2\mathcal M}(x),
\end{align*}
Now, by $(iii)_s, (iv)_s$ and Cauchy--Schwarz inequality,  the second sum represents the  $CZ$ block of scale $4\mathcal M$, where the argument is similar to \eqref{BT:18}.
\end{proof}
The basic tool of this paper is the following submultiplicative inequality, which generalizes the ones in \cite{C1} and \cite{PZ}.
\begin{theorem}\label{BT:3}
If $T_M^1,T_M^2,T_M^3,T_M^4$ are operators of the form \eqref{BT:1} then
\begin{align}\label{BT:4}
&\|T_M^1\cdot T_M^2\cdot T_M^3\cdot T_M^4\|_{A_M} \notag\\
&\quad\le C\sum_{\sigma-\text{permut.}}\|T_M^{\sigma(1)}\|_{A_M}\cdot\|T_M^{\sigma(2)}\|_{A_M}\cdot\|T_M^{\sigma(3)}\|_{A_M}\cdot\|T_M^{\sigma(4)}\|_{\ell^2\to\ell^2}+\\
&\qquad+\epsilon(M)\cdot\|T_M^1\|_{A_M}\cdot\|T_M^2\|_{A_M}\cdot\|T_M^3\|_{A_M}\cdot\|T_M^4\|_{A_M}\notag
\end{align}
where $C$ and $\epsilon(M)$ are independent of the operators $T_M^i$ and $\epsilon(M)\to0$ as $M\to\infty$.
\end{theorem}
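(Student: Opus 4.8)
The plan is to expand the fourfold convolution by multilinearity and organize the resulting terms according to their ``$\ham$--degree''. Writing $T_M^i=\lambda_i\delta_0+\beta_i\ham+\gamma_i\ham^2+K^{(i)}$ and multiplying out, one obtains a bounded number of elementary terms, each of the form $c\cdot\ham^{\,d}*K^{(i_1)}*\cdots*K^{(i_r)}$, where $\{i_1,\dots,i_r\}\subseteq\{1,2,3,4\}$ is the set of indices whose factor contributed its CZ part, $c$ is the product of the coefficients $\lambda_j,\beta_j,\gamma_j$ selected from the other factors, and the integer $d$ (the ``$\ham$--degree'') is the number of factors that contributed $\ham$ plus twice the number that contributed $\ham^2$; convolution being commutative, the order plays no role. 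Since every factor that contributed neither $\delta_0$ nor its CZ part raises $d$ by at least $1$, the terms split into two families according to whether $d\le2$ or $d\ge3$, and these will produce, respectively, the symmetric sum and the $\epsilon(M)$--term in \eqref{BT:4}. (Four factors give exactly enough room for this dichotomy to close: whenever $d\le2$ at least two of the four factors are of $\delta_0$-- or CZ--type, while $d\ge3$ forces an $\ham^3$ to be available as a subfactor.)

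For the terms with $d\le2$ we have $\ham^{\,d}\in\{\delta_0,\ham,\ham^2\}$. One absorbs $\ham^{\,d}$ into the convolution using the composition estimates of \cite{PZ}: convolution with $\ham$ or with $\ham^2$ sends a CZ kernel of the form \eqref{ISR:1} to an operator $a\delta_0+b\ham+e\ham^2+(\text{CZ kernel of the form \eqref{ISR:1}})$ whose four ingredients are each controlled by the $\ell^2\!\to\!\ell^2$ norm of the CZ kernel; Lemma~\ref{CS} then disposes of the remaining convolutions and, crucially, permits one CZ factor to be measured only in $\ell^2\!\to\!\ell^2$. On the coefficient side, $|\lambda_j|,|\beta_j|,|\gamma_j|\le\|T_M^j\|_{A_M}$; moreover, since $\widehat{\ham}(0)=\widehat{\ham^2}(0)=\widehat{K^{(j)}}(0)=0$, testing $T_M^j$ against bumps of arbitrarily large scale gives $|\lambda_j|\le\|T_M^j\|_{\ell^2\to\ell^2}$, so a $\delta_0$--type factor too may be spent at only $\ell^2$ cost. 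As in every $d\le2$ term at least one factor is of $\delta_0$-- or CZ--type, exactly one factor can be kept in $\ell^2\!\to\!\ell^2$ while the remaining three are bounded by their $A_M$--norms; since the $\ell^2\!\to\!\ell^2$ norm of any operator of the form \eqref{BT:1} is dominated by its $A_M$--norm, relaxing the superfluous $\ell^2$ bounds back to $A_M$ bounds, and summing over the bounded number of terms while symmetrizing over which factor is spared, bounds the total $d\le2$ contribution by $C\sum_{\sigma}\|T_M^{\sigma(1)}\|_{A_M}\|T_M^{\sigma(2)}\|_{A_M}\|T_M^{\sigma(3)}\|_{A_M}\|T_M^{\sigma(4)}\|_{\ell^2\to\ell^2}$.

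For the terms with $d\ge3$ the decisive input, taken from \cite{PZ}, is that $\ham^3$ is itself a CZ kernel of the form \eqref{ISR:1} with $\|\ham^3\|_{CZ}\le\epsilon(M)$, $\epsilon(M)\to0$ as $M\to\infty$ --- this is where the scale separation $\theta=\alpha-1-\delta$, the choice of the ranges $\mathbb P_M^-,\mathbb P_M^+$, the inequality $\frac{\alpha-1}{\alpha}+\delta<\alpha-1-\delta$, and the circle--method analysis of compositions of the rough transform blocks $\hams$ all enter. Bootstrapping with Lemma~\ref{CS} and the composition estimates above, every $\ham^k$ with $k\ge3$, and every $\ham^k*K^{(i_1)}*\cdots*K^{(i_r)}$, is a CZ kernel of norm $\le C\epsilon(M)\prod_l\|K^{(i_l)}\|_{CZ}$. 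Hence, factoring one copy of $\ham^3$ (available whenever $d\ge3$) out of each such elementary term, its $A_M$--norm is at most $C\epsilon(M)$ times the product of the four coefficient--or--CZ--norm ingredients, each bounded by the corresponding $\|T_M^i\|_{A_M}$; summing and renaming $C\epsilon(M)$ as $\epsilon(M)$ gives the second term of \eqref{BT:4}. The heart of the matter --- and the step I expect to be the main obstacle --- is thus twofold: verifying the composition estimates for $\ham$ and $\ham^2$ acting on CZ kernels in the sharp form that allows one factor to be kept in $\ell^2\!\to\!\ell^2$, and, above all, establishing the quantitative decay $\|\ham^3\|_{CZ}\to0$; both rest on the delicate harmonic--analytic and arithmetic properties of the blocks $\hams$ developed in \cite{PZ}.
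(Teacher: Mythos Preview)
Your proposal has a genuine gap: the claim that $\|\ham^3\|_{CZ}\le\epsilon(M)\to0$ is false. The multiplier $\widehat{\ham}$ is purely imaginary and satisfies $\|\widehat{\ham}\|_{L^\infty}\ge c>0$ uniformly in $M$ (the truncated rough Hilbert transform is a genuine singular integral), so $\|\ham^3\|_{\ell^2\to\ell^2}=\|(\widehat{\ham})^3\|_{L^\infty}\ge c^3$; since the CZ norm dominates the $\ell^2\to\ell^2$ norm, $\|\ham^3\|_{CZ}$ is bounded \emph{below} uniformly in $M$. The paper does prove (inside Lemma~\ref{BT:9}) that $\ham^3$ is a CZ kernel of the form \eqref{ISR:1}, but with a uniform bound, not a decaying one. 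Consequently your treatment of the $d\ge3$ terms does not produce the $\epsilon(M)$ factor, and since for the ``pure'' terms $\beta_1\beta_2\beta_3\beta_4\,\ham^4$, $\gamma_1\gamma_2\gamma_3\gamma_4\,\ham^8$, etc.\ there is no $\delta_0$-- or CZ--type factor to spend in $\ell^2\to\ell^2$, your argument yields only crude submultiplicativity, not \eqref{BT:4}.

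In the paper the $\epsilon(M)$ has an entirely different origin. Lemma~\ref{BT:6} gives $|\lambda_i|\le\|T_M^i\|_{\ell^2\to\ell^2}+\epsilon_1(M)\|T_M^i\|_{A_M}$, the error coming from the pointwise values $|\ham^2(0)|\lesssim M^{-(\alpha-1-\delta)/\alpha}$ and $|K^M(0)|\lesssim M^{-(\alpha-1-\delta)/2}$ (all blocks live at scales $\ge M^\theta$). One peels off the four $\delta_0$ parts in turn, each extraction costing exactly the $\epsilon(M)$ term in \eqref{BT:4}, and is left with $\delta_0$--free operators. For those the product is shown to be a pure CZ kernel with norm bounded by the symmetric sum \emph{without} any $\epsilon(M)$, via the scale--localized inclusion--exclusion of Lemma~\ref{BT:7}: one proves that $Z=T^{1}_{M,s_1}*\mathbb T^{2}_{M,s_2}*\mathbb T^{3}_{M,s_3}$ is a single CZ block at scale $\sim s_1$ with constant $\lesssim\prod_{i\le3}\|T_M^i\|_{A_M}$, and then convolves with the truncation $\mathbb T^{4}_{M,s_4}$, whose $\ell^2\to\ell^2$ norm is $\lesssim\|T_M^4\|_{\ell^2\to\ell^2}$ by Lemma~\ref{BT:5}. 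A related subtlety also affects your $d\le2$ case: you cannot replace $\|K^{(j)}\|_{\ell^2\to\ell^2}$ by $\|T_M^j\|_{\ell^2\to\ell^2}$, since the CZ part alone is not controlled by the full operator norm (there may be cancellation among $\beta_j\ham$, $\gamma_j\ham^2$ and $K^{(j)}$). The paper avoids this by truncating $T_M^4$ \emph{as a whole} rather than first splitting it into its four ingredients.
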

In particular, there is $C>0$ independent of $M$, so that $C\|\cdot\|_{A_M}$ is submultiplicative.
\begin{corollary*}
If a family of operators $\lambda\mathbb I +\ham$ satisfy invertibility requirements of Theorem \ref{ISR:2} then the inverse operators satisfy
\begin{equation*}
\big\|(\lambda\mathbb I+\ham)^{-1}\big\|_{A_M}\le C,\qquad M\ge M_1,
\end{equation*}
for some $M_1$.
\end{corollary*}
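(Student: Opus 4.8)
The plan is to realise $(\lambda\mathbb I+\ham)^{-1}$ as the limit in $A_M$ of a quadratically convergent Newton iteration started from a polynomial in $\ham$, with Theorem \ref{BT:3} used to keep every $A_M$-norm bounded uniformly in $M$. First record three preliminary facts. (a) By Theorem \ref{BT:3} and the remark following it, $C\|\cdot\|_{A_M}$ is submultiplicative for $M\ge M_1$; in particular a product of operators of the form \eqref{BT:1} is again of that form, and $\|\ham^n\|_{A_M}\le C^{\,n-1}$ for each fixed $n$, uniformly in $M$. (b) $\|T\|_{\ell^2\to\ell^2}\le c\,\|T\|_{A_M}$ for $T$ of the form \eqref{BT:1}: the pieces $\ham,\ham^2$ are bounded on $\ell^2(\mathbb Z)$ uniformly in $M$ by \cite{PZ}, and a CZ kernel is bounded on $\ell^2(\mathbb Z)$ by a constant multiple of its $\|\cdot\|_{CZ}$ norm, by the usual almost-orthogonality estimate. (c) Since the kernel of $\ham$ is real and odd, $\widehat{\ham}$ is purely imaginary with $|\widehat{\ham}(\xi)|\le\|\ham\|_{\ell^2\to\ell^2}\le C_H$, $C_H$ independent of $M$, and the hypothesis on the inverse forces $|\lambda+\widehat{\ham}(\xi)|\ge c_0>0$ for all $\xi$, uniformly in $M$.

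Next, build an approximate inverse. The compact set $E=\{w\in i[-C_H,C_H]:|w+\lambda|\ge c_0/2\}$ is a union of at most two segments, hence has connected complement, and $(\lambda+w)^{-1}$ is holomorphic on a neighbourhood of it; by Runge's theorem choose a polynomial $p$, whose degree and coefficients depend only on $\lambda,c_0,C_H$ and not on $M$, with $\sup_{w\in E}|(\lambda+w)p(w)-1|\le\frac12$. Set $X_0=p(\ham)$ and $F_0=\mathbb I-(\lambda\mathbb I+\ham)X_0=q(\ham)$ with $q(w)=1-(\lambda+w)p(w)$. By (a), $X_0$ and $F_0$ are of the form \eqref{BT:1} with $\|X_0\|_{A_M},\|F_0\|_{A_M}$ bounded uniformly in $M$ (by $\sum_n|p_n|C^{n-1}$, resp. $\sum_n|q_n|C^{n-1}$); since the range of $\widehat{\ham}$ lies in $E$, $\|F_0\|_{\ell^2\to\ell^2}=\sup_\xi|q(\widehat{\ham}(\xi))|\le\frac12$. (When $|\lambda|>C$ one may instead take the degree-two approximation $X_0=\lambda^{-1}\delta_0-\lambda^{-2}\ham+\lambda^{-3}\ham^2$, for which $F_0=-\lambda^{-3}\ham^3$; the degenerate cases $\lambda=0$ and $E=\emptyset$ are handled in the same way.)

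Now iterate: put $F_k=\mathbb I-(\lambda\mathbb I+\ham)X_k$ and $X_{k+1}=X_k(\mathbb I+F_k+F_k^2+F_k^3)$, so that $(\lambda\mathbb I+\ham)X_{k+1}=(\mathbb I-F_k)(\mathbb I+F_k+F_k^2+F_k^3)=\mathbb I-F_k^4$, i.e.\ $F_{k+1}=F_k^4$, with all $X_k,F_k$ of the form \eqref{BT:1}. With $a_k=\|F_k\|_{A_M}$ and $b_k=\|F_k\|_{\ell^2\to\ell^2}$, submultiplicativity of the operator norm gives $b_{k+1}\le b_k^4$, hence $b_k\le 2^{-4^{k}}$, while Theorem \ref{BT:3} applied with all four factors equal to $F_k$ gives $a_{k+1}\le C_1a_k^3b_k+\epsilon(M)a_k^4$, $C_1$ and $\epsilon(M)\to0$ independent of $M$. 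A direct analysis of this recursion — fixing $M_1$ so large that $\epsilon(M)$ is small — shows $\sup_k a_k<\infty$ and $a_k\to0$ super-exponentially, both bounds uniform in $M$; consequently $\sum_k a_k<\infty$, $\|X_{k+1}-X_k\|_{A_M}\le C\|X_k\|_{A_M}(a_k+Ca_k^2+C^2a_k^3)$, $\sup_k\|X_k\|_{A_M}\le C'$, and $(X_k)$ is Cauchy in $A_M$. A routine limiting argument (the scalar coefficients and the $\|\cdot\|_{CZ}$ constants of the representations stay bounded) shows the limit is again of the form \eqref{BT:1} with $A_M$-norm at most $C'$; since $b_k\to0$ it equals $(\lambda\mathbb I+\ham)^{-1}$ by the assumed invertibility. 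This proves the corollary with this $M_1$.

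The main obstacle is precisely the recursion for $a_k$: there is no control on $\|F_0\|_{A_M}$ (it degrades as $\lambda$ approaches the segment $i[-C_H,C_H]$), so one must verify that the transient growth of $a_k$, before the $\ell^2$-smallness of $F_k$ takes over, remains bounded by a constant independent of $M$. This is exactly what the mixed $A_M$/$\ell^2\to\ell^2$ form of \eqref{BT:4} delivers: its single $\|\cdot\|_{\ell^2\to\ell^2}$ slot carries the rapidly decaying factor $b_k$ at each iteration, and the term $\epsilon(M)\cdot\|T_M^1\|_{A_M}\cdots\|T_M^4\|_{A_M}$ absorbs the genuinely $M$-dependent error.
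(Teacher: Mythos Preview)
Your argument is correct and follows the same scheme the paper sketches (deferring to \cite{PZ}, Theorem~2.5): iterate Theorem~\ref{BT:3} on fourth powers of an error operator so that $\|F_0^{4^{N_0}}\|_{A_M}$ is driven below any fixed threshold while remaining uniformly bounded along the way, the single $\ell^2\to\ell^2$ slot in \eqref{BT:4} absorbing the super-exponentially decaying $b_k$ and the $\epsilon(M)$-term handled by choosing $M_1$ after $N_0$. Your Runge-theorem construction of the seed $X_0=p(\ham)$ and the Newton packaging $X_{k+1}=X_k(\mathbb I+F_k+F_k^2+F_k^3)$ make explicit what the paper leaves to the reference, but the underlying recursion $a_{k+1}\le C_1a_k^3b_k+\epsilon(M)a_k^4$ and its analysis (the $4^k$ decay of $\log b_k$ eventually beating the at-most-$3^k$ growth of $\log a_k$) are identical to the inductive step the paper alludes to.
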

\begin{proof} The proof follows the same lines as the proof of Theorem 2.5 in \cite{PZ}. The key is an inductive proof of the fact that, with a suitable choice of $N_0$ and $M_1$, large enough, the norm $\|T_M^{4^{N_0}}\|_{A_M}$, $M\ge M_1$ can be made arbitrarily small. In \cite{PZ}, power $2^{N_0}$ was used instead of $4^{N_0}$. Other details of the proof are the same as in \cite{PZ}.
\end{proof}
We will use the following cutoff functions. Let $w\in C^\infty(|x|\le2),\,w\equiv1$ on $|x|\le1$, and $\tilde w(x)=w(x)-w(2 x)$. Thus $\tilde w\in C^\infty(1/2\le|x|\le2)$. For each dyadic $s\ge M^\theta$ we let
\begin{align*}
\varphi_s(x)&=w\Big(\frac{x}{s}\Big),\quad\text{first dyadic $s\ge M^\theta$}\\
\varphi_s(x)&=\tilde w\Big(\frac{x}{s}\Big),\quad\text{remaining $s\ge M^\theta$}
\end{align*}
Suppose $T$ is an operator kernel. We write $T_s(x)=T(x)\,\varphi_s(x)$ and
\begin{equation*}
\mathbb T_s(x)=\sum_{\twoline{M^\theta\le j\le s}{j-\text{dyadic}}} T_j(x).
\end{equation*}

\begin{proof}[Proof of Theorem \ref{BT:3}] The proof is a more careful version of the argument in \cite{PZ}. We begin by formulating basic facts. In what follows we will often encounter terms of the form $\varphi_s\cdot\ham$. We will treat them as equal to $\mathcal H_s$ (despite the fact that some of the defining functions $\phi_s$ are different), which simplifies the write-up and does not affect the line of argument. This is so, because whenever we will make such simplifications, we will not need particular properties of fixed functions $\phi_s$.
\begin{lemma}\label{BT:9}
For a suitable choice of a constant $C>0$, $A_M$ with norm $C\|\cdot\|_{A_M}$ is a Banach algebra.
\end{lemma}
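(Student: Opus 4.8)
The plan is to verify that $(A_M,\|\cdot\|_{A_M})$ is a complete normed space on which convolution is submultiplicative up to a single constant $C$; then $C\|\cdot\|_{A_M}$ is a Banach-algebra norm. For completeness I would look at the bounded linear map $\Phi\colon(\lambda,\beta,\gamma,K)\mapsto\lambda\delta_0+\beta\ham+\gamma\ham^2+K$ from $\mathbb C^{3}\oplus\mathcal C_M$, where $\mathcal C_M$ denotes the CZ kernels of the form \eqref{ISR:1} and the norm on $\mathbb C^{3}\oplus\mathcal C_M$ is $|\lambda|+|\beta|+|\gamma|+\|K\|_{CZ}$, into the bounded convolution operators on $\ell^2(\mathbb Z)$; boundedness of $\Phi$ uses that $\delta_0,\ham,\ham^2$ have finite operator norm and that $\|K\|_{\ell^2\to\ell^2}\le C\|K\|_{CZ}$ (a consequence of the estimate in Lemma \ref{BT:5}). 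One first checks that $(\mathcal C_M,\|\cdot\|_{CZ})$ is a Banach space: for a Cauchy sequence one passes to near-optimal representations, notes that for each dyadic $s\ge M^{\alpha-1-\delta}$ the scale-$s$ building blocks form a Cauchy sequence in the complete space of kernels supported in $[-s,s]$ with the $(iii)_s,(iv)_s$ norms, observes that the limiting blocks inherit $(i)_s$--$(iv)_s$, and reassembles to obtain the limit with a controlled representation. Then $\ker\Phi$ is closed, $A_M=\operatorname{ran}\Phi$, and $\|\cdot\|_{A_M}$ is precisely the quotient norm of $(\mathbb C^{3}\oplus\mathcal C_M)/\ker\Phi$, so $A_M$ is complete.

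The main step is closure under convolution with the submultiplicative bound, and what makes it work is the observation that, for each fixed $M$, the kernels $\ham$, $\ham^2$, $\ham^3=\ham\ast\ham^2$ and $\ham^4=\ham^2\ast\ham^2$ are all CZ kernels of the form \eqref{ISR:1}. Indeed, each transform block $\hams$ in \eqref{BT:13} is supported in $\{|x|\le 2s\}$, has vanishing mean, and satisfies $(iii)_s,(iv)_s$ with a finite (if $M$-dependent) constant, while its scale lies in $[M^{\alpha-1-\delta},M^{\alpha-1}]\cup[M^{1-\delta},M]$; hence $\ham=\sum_s\hams$ is a CZ kernel of scales $\ge M^{\alpha-1-\delta}$, one application of Lemma \ref{CS} yields $\ham^2$, and a second yields $\ham^3$ and $\ham^4$, all CZ kernels of the same form with finite $\|\cdot\|_{CZ}$. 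Given near-optimal representations $T_M^{i}=\lambda_i\delta_0+\beta_i\ham+\gamma_i\ham^2+K^{i}$, I would then expand the convolution $T_M^{1}\ast T_M^{2}$: the part landing in $\operatorname{span}\{\delta_0,\ham,\ham^2\}$, namely $\lambda_1\lambda_2\delta_0+(\lambda_1\beta_2+\beta_1\lambda_2)\ham+(\lambda_1\gamma_2+\beta_1\beta_2+\gamma_1\lambda_2)\ham^2$, is already of the form \eqref{BT:1} with the obvious bounds, while each remaining term --- $\lambda_iK^{j}$, $\beta_i\,\ham\ast K^{j}$, $\gamma_i\,\ham^2\ast K^{j}$, $(\beta_1\gamma_2+\gamma_1\beta_2)\ham^3$, $\gamma_1\gamma_2\ham^4$ and $K^{1}\ast K^{2}$ --- is, by Lemma \ref{CS} together with the observation above, a CZ kernel of the form \eqref{ISR:1} whose $\|\cdot\|_{CZ}$ is at most $C_M$ times a product of two of the quantities $|\lambda_i|,|\beta_i|,|\gamma_i|,\|K^{i}\|_{CZ}$. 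Summing these and using near-optimality gives $\|T_M^{1}\ast T_M^{2}\|_{A_M}\le C_M\,\|T_M^{1}\|_{A_M}\,\|T_M^{2}\|_{A_M}$, so the choice $C=C_M$ works.

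I expect the only genuine obstacle to be the dependence of the constant on $M$. The crude estimates above let $\|\ham\|_{CZ}$, hence $C_M$, grow with $M$, because $\ham$ is not a CZ kernel with $M$-uniform norm and the triangle inequality over its $\asymp\log M$ transform blocks is wasteful; moreover the mixed convolutions $\ham\ast K$, $\ham^2\ast K$ would have to be redistributed across all dyadic scales, rather than lumped at the top scale, to avoid a loss of $\sqrt{M}$, which forces the use of the almost-orthogonality of the blocks (equivalently the uniform bound $\|\ham\|_{\ell^2\to\ell^2}\le C$). Obtaining a constant independent of $M$ is exactly the content of the "more careful" Theorem \ref{BT:3}; for the present lemma only the Banach-algebra structure for each fixed $M$ is needed, and the argument above supplies it.
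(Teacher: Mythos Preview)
Your reading of the lemma is the source of the gap. The constant $C$ in Lemma~\ref{BT:9} is meant to be \emph{independent of $M$}; the paper's own proof opens with ``we need uniform in $M$ estimates for $A_M$ norms of the operators $\ham^3,\ham^4,\ham*K^M$ and $\ham^2*K^M$'', and this uniformity is indispensable where the lemma is later invoked inside the proof of Theorem~\ref{BT:3} (to control the piece $II=\lambda_1\,T_M^2\cdot T_M^3\cdot T_M^4$ via two applications of submultiplicativity). Your closing claim that the uniform constant is only obtained afterwards, as a corollary of Theorem~\ref{BT:3}, is circular: that theorem already uses Lemma~\ref{BT:9} with a uniform constant.

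Your route to submultiplicativity---declare each $\hams$ a CZ building block and iterate Lemma~\ref{CS}---cannot be upgraded to give this. The transform block $\hams$ is supported on roughly $s^{1/\alpha}$ isolated lattice points with values $\sim s^{-1/\alpha}$, so $\|\hams\|_{\ell^2}^2\simeq s^{-1/\alpha}$ rather than $s^{-1}$, and a unit shift produces no cancellation in $(iv)_s$; the best CZ constant for $\hams$ therefore grows like a positive power of $s$, hence of $M$. This is exactly why $\ham$ and $\ham^2$ are isolated in the definition of $A_M$ instead of being absorbed into the CZ part. The paper proceeds differently: it never treats $\ham$ as a CZ kernel. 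It uses the structural Lemma~\ref{Conv} (the key input from \cite{PZ}), together with the two-scale decomposition $\ham=\hamm+\hamp$ and the gap condition $\frac{\alpha-1}{\alpha}+\delta<\alpha-1-\delta$, to show directly that each of $\ham*K^M$, $\ham^2*K^M$, $\ham^3$, $\ham^4$ is a CZ kernel of the form~\eqref{ISR:1} with $\|\cdot\|_{CZ}$ bounded uniformly in $M$. That lemma---which exploits the arithmetic form of $\hams$ and the scale restrictions, not merely the $\ell^2$ bound---is the decisive ingredient your proposal lacks.
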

\begin{proof} We are going to show that the product
\begin{equation*}
(\lambda\delta_0+\beta\ham+\gamma\ham^2+K^M)\cdot(\tilde\lambda\delta_0+\tilde\beta\ham+\tilde\gamma\ham^2+\tilde K^M)
\end{equation*}
belongs to $A_M$. It follows immediately from the definition of the algebra $A_M$ that we need uniform in $M$ estimates for $A_M$ norms of the operators: $\ham^3,\,\ham^4,\,\ham*K^M\ \text{ and }\ham^2*K^M$. In fact we will prove that all of these products are CZ kernels. To do this, we use the following lemma, essentially proved in \cite{PZ}.

\begin{lemma}[\cite{PZ}]\label{Conv} Recall that
 $\mathcal H_{s}$ denotes a transform
 block of scale $s$ and $K_s$  a CZ building block of scale $s$. We have

\begin{enumerate}[(i)]
\item\label{BT:14} $\mathcal H_{s_1}*\sum_{s\in \mathcal{A}}K_{s}$ is a CZ building block of scale $4s_1$,
provided  all $s\in \mathcal{A}$ satisfy $s_1^{\frac{\alpha-1}{\alpha}+\delta}\le s\le s_1$,
\item\label{BT:15}  $\mathcal H_{s_1}*\sum_{s\in \mathcal{A}}K_{s}$ is a CZ kernel, provided
 all $s\in \mathcal{A}$ satisfy $s\ge s_1$,
\item\label{BT:16} $\mathcal H_{s_1}*\sum_{s\in \mathcal{A}}\mathcal H_{s}$ is a CZ building block of scale $4s_1$, provided  all $s\in \mathcal{A}$ satisfy $s_1^{\alpha-1+\delta}\le s\le s_1$.
\end{enumerate}
\end{lemma}
\begin{proof}Part \eqref{BT:16} follows from \cite{PZ}, Lemma 3.8, with
\begin{equation*}
  \tilde{\mathbb T}_s=\sum_{s\in \mathcal{A}}\mathcal H_{s}.
\end{equation*}
Part \eqref{BT:15} is obvious, since each $\mathcal H_{s_1}*K_{s},\ s\in\mathcal{A}$, is a CZ building block of scale $2s$.
Finally, \eqref{BT:14} has been proved in \cite{PZ} under a stronger assumption $s_1\ge s^{\alpha(1-1/\alpha+\delta/\alpha)}$ ($s$ and $s_1$ are reversed in the argument in \cite{PZ}). That argument works without changes provided we prove a strengthened version of Lemma 3.7 from \cite{PZ}. Namely, observe that statements (iii) and (iv) of that lemma hold under a weaker assumption $s_1\ge s^{1-1/\alpha+\delta/\alpha}$. We now outline the necessary changes in the proof of
Lemma 3.7 from \cite{PZ}. Under the notation from the proof of Lemma 3.7 in \cite{PZ} (in particular the definitions of $I$, $II$ and $III$) we have:
\begin{enumerate}[(a)]
\item $|I|$ depends only on $\|K_{s_1}\|_{\ell^1}$. Change of the range of $s_1$ to $s_1\ge s^{1-1/\alpha+\delta\alpha}$ does not affect the estimate.
\item Since $\|K_{s_1}\|_{\ell^2}^2\lesssim 1/s_1$ (compare with $\|\mathcal{H}_{s_1}\|_{\ell^2}^2\simeq1/s_{1}^{1/\alpha}$), the estimate of $|II|$ improves to:
    \begin{equation*}
      |II|\le\frac{C}{s_1\cdot s^{1/\alpha}}\le\frac{C}{s^{1/\alpha}s^{1-1/\alpha+\delta/\alpha}}=\frac{C}{s^{1+\delta/\alpha}}.
    \end{equation*}
\item For the same reason as in (b), we have, for $s_1\ge s^{1-1/\alpha+\delta/\alpha}$
    \begin{equation*}
      |III|\le\frac{C\cdot s^{1-1/\alpha+\delta/2\alpha}}{s\cdot s_1}=\frac{C}{s^{1+\delta/2\alpha}}.
    \end{equation*}
\end{enumerate}
Other parts of the proof of \cite{PZ} Lemma 3.7 require no changes.
\end{proof}

We now proceed with estimating types of operator products that arise in the proof of Lemma
\ref{BT:9}
\newline(1) Observe, that $\ham*K^M=\hamm*K^M+\hamp*K^M$. Similarly to \cite{PZ},
we apply Lemma \ref{Conv} to the decomposition
$$
\hamm*K^M=\sum_{s_1\in \mathbb{P}^-_M}\mathcal H_{s_1}*\sum_{M^\theta\le s_2\le s_1}K_{s_2}+
\sum_{M^\theta\le s_2}K_{s_2}*\sum_{s_1< s_2, s_1\in \mathbb{P}^-_M}\mathcal H_{s_1}=I+II
$$
By Lemma \ref{Conv} (\ref{BT:14}) for a fixed $s_1\in \mathbb{P}^-_M$,
the kernel  $\mathcal H_{s_1}*\sum_{M^\theta\le s_2\le s_1}K_{s_2}$
is CZ  building block of scale $4s_1$. We infer that  $I$  is a CZ kernel composed of CZ  building blocks
of scales $\ge 4M^{\alpha-1-\delta}$. We now prove a similar statement for $II$.
Observe that for fixed $s_2$ and $T=\sum_{s_1< s_2, s_1\in \mathbb{P}^-_M}\mathcal H_{s_1}$
we have $\|T\|_{\ell^2\rightarrow\ell^2}\le C$ for a universal constant $C$, $\rm{supp }T \subset [-2s_2, 2s_2]$.
Consequently $K_{s_2}*\sum_{s_1< s_2, s_1\in \mathbb{P}^-_M}\mathcal H_{s_1}$ is a
CZ  building block of scale $4s_2$.

Similarly, the same argument applies to the second summand $\hamp*K^M$ proving that it is a CZ kernel composed of CZ
building blocks of scales $\ge 4M^{1-\delta}$. In this case
 Lemma \ref{Conv} (\ref{BT:14}) is applicable provided $\delta$  satisfies
$\alpha-1 -\delta>\frac{\alpha-1}{\alpha}+\delta$, which we have assumed.
\newline (2) We now
consider $\ham^2*K^M$. Decomposing $\ham=\hamm+\hamp$ we need to consider components of the form
$\hamm*\hamm*K^M$, $\hamm*\hamp*K^M$ and $\hamp*\hamp*K^M$. Observe, that by Lemma \ref{Conv}
(\ref{BT:16}) or directly by \cite{PZ}, $\hamm*\hamm$ is a CZ kernel with buildings blocks of
scales between $4M^{\alpha-1-\delta}$ and $4M^{\alpha-1}$. Consequently, $(\hamm*\hamm)*K^M$ is a
composition of two CZ operators of the form \eqref{ISR:1}. By Lemma \ref{CS} it is again an operator of the form \eqref{ISR:1}, that is a sum of CZ
building blocks of scales $\ge 4M^{\alpha-1-\delta}$. Similar argument holds for $(\hamp*\hamp)*K^M$,
again, if $\delta$ is small enough.
Finally,
recall from the case considered above that
$\hamp*K^M$ is a sum of CZ building blocks $K_s$ of scales $s\ge M^{\alpha-\delta}$. We thus have
\begin{equation*}
  \hamm*\hamp*K^M=\sum_{M^{\alpha-\delta}\le s}\hamm*\tilde K_s,
\end{equation*}
and the conclusion follows (since each $\hamm*K_s,\ s\ge M^{\alpha-\delta}$ is a CZ building block).
\newline (3) We now turn to $\ham^3$. Let
\begin{equation*}
U_1=\hamp*\hamp,\quad U_2=\hamm*\hamm.
\end{equation*}
By considerations in the above case (2) $U_1,U_2$ are CZ kernels with building blocks of scales $s\ge
M^{\alpha-\delta}$ and $M^{\alpha-1-\delta}\le s\le M^{\alpha-\delta}$ respectively. By Lemma \ref{Conv} (\ref{BT:14}) all of the kernels
\begin{equation*}
U_1*\hamm,\ U_1*\hamp,\ U_2*\hamm,\ U_2*\hamp
\end{equation*}
are CZ kernels with building blocks of scales $\ge M^{\alpha-1-\delta}$. This finishes the proof
for $\ham^3$.

We omit similar arguments for $\ham^4$. This concludes the proof of Lemma \ref{BT:9}.
\end{proof}
\begin{lemma}[\cite{PZ}, Lemma 3.2]\label{BT:5}
 If $T$ be an operator on $\ell^2$, and $T_s,\,\mathbb T_s$ are the smooth truncations defined above, then $\|T_s\|_{\ell^2\to\ell^2},\,\|\mathbb T_s\|_{\ell^2\to\ell^2}\le C \|T\|_{\ell^2\to\ell^2}$.
\end{lemma}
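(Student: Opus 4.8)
The plan is to realize each of $T_s$ and $\mathbb T_s$ as a superposition of unitary conjugates of $T$ and then apply Minkowski's integral inequality. First I would reduce to a single bump. By construction $T_s(x)=T(x)\varphi_s(x)$ with $\varphi_s(x)=\tilde w(x/s)$ (or $w(x/s)$ for the first dyadic $s\ge M^\theta$), and the telescoping identity $\sum_{M^\theta\le j\le s}\varphi_j(x)=w(x/s)$ — the sum of the $\tilde w(\cdot/j)=w(\cdot/j)-w(2\,\cdot/j)$ collapses — gives $\mathbb T_s(x)=T(x)\,w(x/s)$. Hence it suffices to prove that for any $\eta\in C_c^\infty(\mathbb R)$ the convolution operator $S$ on $\ell^2(\mathbb Z)$ with kernel $S(x)=T(x)\,\eta(x/s)$ satisfies $\|S\|_{\ell^2\to\ell^2}\le\|\widehat\eta\|_{L^1(\mathbb R)}\,\|T\|_{\ell^2\to\ell^2}$ uniformly in $s$; one then takes $C=\max\{\|\widehat w\|_{L^1},\|\widehat{\tilde w}\|_{L^1}\}$. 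It matters here that $\mathbb T_s$ is a \emph{single} bump times $T$: estimating the $\sim\log M$ blocks $T_j$ separately would cost a logarithm.

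For the main estimate I would use Fourier inversion on $\mathbb R$: for every $z\in\mathbb Z$,
\begin{equation*}
\eta(z/s)=\int_{\mathbb R} s\,\widehat\eta(s\xi)\,e^{2\pi i z\xi}\,d\xi .
\end{equation*}
Substituting $z=x-y$ into $(Sf)(x)=\sum_y T(x-y)\,\eta((x-y)/s)\,f(y)$ and interchanging the sum (finite, for finitely supported $f$) with the integral — permissible since $\int_{\mathbb R} s|\widehat\eta(s\xi)|\,d\xi=\|\widehat\eta\|_{L^1(\mathbb R)}<\infty$ as $\widehat\eta$ is Schwartz — gives
\begin{equation*}
(Sf)(x)=\int_{\mathbb R} s\,\widehat\eta(s\xi)\,e^{2\pi i x\xi}\big(T(e_\xi f)\big)(x)\,d\xi,\qquad (e_\xi f)(y)=e^{-2\pi i y\xi}f(y).
\end{equation*}
Since multiplication by $e_\xi$ and by $e^{2\pi i x\xi}$ are isometries of $\ell^2(\mathbb Z)$, Minkowski's integral inequality yields
\begin{equation*}
\|Sf\|_{\ell^2}\le\int_{\mathbb R} s|\widehat\eta(s\xi)|\,\|T(e_\xi f)\|_{\ell^2}\,d\xi\le\|T\|_{\ell^2\to\ell^2}\|f\|_{\ell^2}\int_{\mathbb R} s|\widehat\eta(s\xi)|\,d\xi=\|\widehat\eta\|_{L^1(\mathbb R)}\|T\|_{\ell^2\to\ell^2}\|f\|_{\ell^2},
\end{equation*}
the last integral equalling $\|\widehat\eta\|_{L^1(\mathbb R)}$ after the change of variables $\xi\mapsto s\xi$, independently of $s$; a density argument then extends the bound to all $f\in\ell^2(\mathbb Z)$.

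The computation is essentially routine; the only points I expect to need care are the telescoping reduction of $\mathbb T_s$ to a single bump (without it one loses a logarithm in the number of scales) and the Fubini step, which is immediate from $\int_{\mathbb R} s|\widehat\eta(s\xi)|\,d\xi<\infty$. Equivalently one could argue on the Fourier side: $S$ has multiplier $\widehat T\ast m_s$ (convolution on $\mathbb R/\mathbb Z$) with $m_s(\xi)=\sum_x\eta(x/s)e^{-2\pi i x\xi}$, Poisson summation gives $\|m_s\|_{L^1(\mathbb R/\mathbb Z)}\le\|\widehat\eta\|_{L^1(\mathbb R)}$ uniformly in $s$, and then $\|S\|_{\ell^2\to\ell^2}=\|\widehat T\ast m_s\|_{L^\infty}\le\|m_s\|_{L^1}\|\widehat T\|_{L^\infty}$ — the same bound repackaged.
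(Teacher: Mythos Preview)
Your argument is correct and is precisely the standard way of proving this type of statement. The paper does not give a proof here; it simply imports the lemma from \cite{PZ}, so there is nothing to compare against in the present text. Your telescoping observation $\sum_{M^\theta\le j\le s}\varphi_j=w(\cdot/s)$ is exactly what the authors' choice of $\varphi_s$ is designed to yield, and the Fourier-inversion/Minkowski argument (equivalently, the multiplier convolution $\widehat T\ast m_s$ with $\|m_s\|_{L^1}$ bounded via Poisson summation) is the canonical proof; this is in all likelihood what the cited Lemma~3.2 in \cite{PZ} does as well.
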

\begin{lemma}\label{BT:6}
Suppose an operator $T_M$ has the form \eqref{BT:1}. Then
\begin{equation*}
|\lambda|\le \|T_M\|_{\ell^2\to\ell^2}+\epsilon_1(M)\|T_M\|_{A_M},
\end{equation*}
where $\epsilon_1(M)$ can be chosen of the form $C\,M^{-b}$ where $C,b>0$ (thus $\epsilon_1(M)\to\infty$ as $M\to\infty$), with constants $C,b$ universal.
\end{lemma}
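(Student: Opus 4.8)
The goal is to extract the coefficient $\lambda$ from a kernel of the form $T_M=\lambda\delta_0+\beta\ham+\gamma\ham^2+K^M$ by testing against a suitable unit vector, in such a way that the contributions of $\beta\ham$, $\gamma\ham^2$ and $K^M$ are all negligible. The plan is to pair $T_M$ with $\delta_0$ itself: we have $\langle T_M\delta_0,\delta_0\rangle = T_M(0) = \lambda + \beta\ham(0) + \gamma\ham^2(0) + K^M(0)$, so it suffices to show that each of $|\ham(0)|$, $|\ham^2(0)|$, $|K^M(0)|$ is at most $CM^{-b}$ times the corresponding coefficient's natural size, i.e. is of the order $M^{-b}$, $M^{-b}$, and $M^{-b}\|K^M\|_{CZ}$ respectively, while $|\langle T_M\delta_0,\delta_0\rangle|\le\|T_M\|_{\ell^2\to\ell^2}$.

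The three estimates are straightforward from the structure already recorded in the excerpt. First, $\ham(0)=0$: the kernel of $\hams$ is supported on the shifted points $\pm[m^\alpha]$ with $\phi_s(m^\alpha/s)\neq0$ forcing $m^\alpha\asymp s\ge M^\theta$, so $[m^\alpha]\neq0$ and the kernel vanishes at the origin; hence in fact no error term is needed here. Second, for $\ham^2(0)=\sum_x\ham(x)\ham(-x)=\sum_x\ham(x)^2$ (using that $\ham$ is odd, $\ham(-x)=-\ham(x)$, so actually $\ham^2(0)=-\sum_x\ham(x)^2<0$ in absolute value $\|\ham\|_{\ell^2}^2$); by Cauchy--Schwarz and the $\ell^2$ size of the transform blocks, $\sum_x\ham(x)^2\le C\sum_{s}\|\hams\|_{\ell^2}^2\le C\sum_{M^\theta\le s\le M}s^{-1/\alpha}\le C M^{-\theta/\alpha}$, which is of the claimed form $CM^{-b}$ with $b=\theta/\alpha>0$. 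Third, $|K^M(0)|\le\sum_{s\ge M^\theta}|K_s(0)|\le\sum_{s\ge M^\theta}\|K_s\|_{\ell^2}\le\|K^M\|_{CZ}\sum_{s\ge M^\theta}s^{-1/2}\le C M^{-\theta/2}\|K^M\|_{CZ}$, again of the desired form. Taking $b$ to be the smallest of the exponents produced and $\epsilon_1(M)=CM^{-b}$, we conclude
\[
|\lambda| = |T_M(0) - \beta\ham(0)-\gamma\ham^2(0)-K^M(0)| \le \|T_M\|_{\ell^2\to\ell^2} + CM^{-b}\big(|\beta|+|\gamma|+\|K^M\|_{CZ}\big),
\]
and since $|\beta|+|\gamma|+\|K^M\|_{CZ}\le\|T_M\|_{A_M}$ for the representation at hand, taking the infimum over representations gives the stated inequality. (The parenthetical remark in the statement that $\epsilon_1(M)\to\infty$ is evidently a typo for $\epsilon_1(M)\to 0$.)

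The only mild subtlety — the step I would flag as the place to be careful rather than a genuine obstacle — is the bookkeeping that the bound must hold for *every* representation of $T_M$ in the form \eqref{BT:1}, so that after taking the infimum one gets $\|T_M\|_{A_M}$ on the right; but since $\lambda$, $\ham(0)$ and the derived pointwise bounds on $\ham^2(0)$ and $K^M(0)$ all make sense representation by representation and the estimate $|\beta|+|\gamma|+\|K^M\|_{CZ}$ is exactly what the infimum in \eqref{BT:2} controls, this goes through cleanly. One should also note that the exponent $b$ depends only on $\theta$ (equivalently on $\alpha,\delta$) and not on $M$ or the operator, which is what "universal" means here, and that $M\ge M_0$ is enough for all the geometric series to converge with the displayed constants.
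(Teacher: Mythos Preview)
Your proof is correct and follows essentially the same route as the paper: evaluate the kernel at $0$ via $\langle T_M\delta_0,\delta_0\rangle$, use $\ham(0)=0$, and bound $|\ham^2(0)|\lesssim M^{-\theta/\alpha}$ and $|K^M(0)|\lesssim M^{-\theta/2}\|K^M\|_{CZ}$ (your exponents $\theta/\alpha$ and $\theta/2$ are exactly the paper's $(\alpha-1-\delta)/\alpha$ and $(\alpha-1-\delta)/2$, since $\theta=\alpha-1-\delta$). One remark on your bookkeeping paragraph: the inequality $|\beta|+|\gamma|+\|K^M\|_{CZ}\le\|T_M\|_{A_M}$ is written the wrong way round for an arbitrary representation; the clean way to close the argument is to note that $\lambda=\sum_xT_M(x)$ is independent of the representation (all other pieces have mean zero), so one may run the estimate with a near-optimal representation, for which $|\gamma|,\|K^M\|_{CZ}\le\|T_M\|_{A_M}+\varepsilon$ --- this is the same tacit step the paper makes when it writes ``$|\gamma|\le\|T_M\|_{A_M}$''.
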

\begin{proof}
 By  \eqref{BT:1} we obtain the identity
 \begin{equation*}
\lambda=\langle T_M\delta_0,\delta_0\rangle-\beta\ham(0)-\gamma\ham^2(0)- K_M(0).
\end{equation*}
Moreover, we have $\ham(0)=0$,
\begin{equation*}
|K_M(0)|\le C\|T_M\|_{A_M}M^{-(\alpha-1-\delta)/2}
\end{equation*}
and
\begin{equation*}
|\ham^2(0)|=\Big|\sum_{\twoline{M^\theta\le s\le M}{s-\text{dyadic}}}
\sum_{m\ge1}\phi_s\Big(\frac{m^\alpha}{s}\Big)^2\frac{1}{m^2}\Big|  \le C_\alpha M^{-(\alpha-1-\delta)/\alpha}.
\end{equation*}
Since $|\gamma|\le \|T_M\|_{A_M} $ and $\alpha-1-\delta>0$, the lemma follows.
\end{proof}
\begin{lemma}\label{BT:7}
For $T^i$, $i=1,2,3,4$ operators on $\ell^2$ we have the following decomposition
\begin{equation*}
T^1\cdot T^2\cdot T^3\cdot T^4=\sum_{\sigma-\text{permut.}}\sum_{s-\text{dyadic}}\sum_{s',s'',s'''}\,T^{\sigma(1)}_s\cdot\mathbb T^{\sigma(2)}_{s'}\cdot\mathbb T^{\sigma(3)}_{s''}\cdot\mathbb T^{\sigma(4)}_{s'''}\cdot \epsilon_{s,s',s'',s'''},
\end{equation*}
where $s',s'',s'''\in\{s/2,s,2s\}$, $\epsilon_{s,s',s'',s'''}\in\{0,\pm1\}$.
\end{lemma}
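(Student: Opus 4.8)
The plan is to establish the identity by dyadically decomposing each factor and then reorganizing the resulting multilinear sum according to the largest scale that occurs. Recall that the cutoffs $\varphi_s$ form a partition of unity, $\sum_{s\ge M^\theta}\varphi_s\equiv1$ (the partial sums telescope to $w(\cdot/S)$, which tends to $1$), so that $T^i=\sum_{s_i}T^i_{s_i}$ with $T^i_{s_i}=T^i\cdot\varphi_{s_i}$, and by definition $\mathbb T^i_{s}=\sum_{M^\theta\le j\le s}T^i_j$. For each fixed $x$ only finitely many $\varphi_s(x)$ are nonzero, so expanding the fourfold convolution gives, at the level of kernels,
\begin{equation*}
T^1\cdot T^2\cdot T^3\cdot T^4=\sum_{s_1,s_2,s_3,s_4}T^1_{s_1}\cdot T^2_{s_2}\cdot T^3_{s_3}\cdot T^4_{s_4},
\end{equation*}
the sum running over dyadic $s_i\ge M^\theta$.

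Next I would group the terms on the right according to which factor carries the largest scale. Given a 4-tuple $(s_1,s_2,s_3,s_4)$, put $s=\max_i s_i$ and let $k$ be the least index with $s_k=s$; this tie-breaking convention is what turns the grouping into a genuine partition of the index set. On the block of tuples with a fixed value of $k$ one has $s_k=s$, while $s_i<s$ — equivalently $s_i\le s/2$, all scales being dyadic — for $i<k$, and $s_i\le s$ for $i>k$. Performing the sums over the subordinate scales inside the convolution, and using that convolution on $\mathbb Z$ is commutative, the contribution of this block is $\sum_s$ of $T^k_s$ convolved with $\mathbb T^i_{s/2}$ for each $i<k$ and with $\mathbb T^i_s$ for each $i>k$; for instance the block $k=2$ contributes $\sum_s T^2_s\cdot\mathbb T^1_{s/2}\cdot\mathbb T^3_s\cdot\mathbb T^4_s$. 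Here $\mathbb T^i_{s/2}$ is read as $0$ when $s/2<M^\theta$, which is harmless because the corresponding block is then empty. Summing the four blocks recovers $T^1\cdot T^2\cdot T^3\cdot T^4$.

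Finally I would match this with the asserted form. For each $k\in\{1,2,3,4\}$ select the (unique) permutation $\sigma$ with $\sigma(1)=k$ and $\sigma(2)<\sigma(3)<\sigma(4)$, assign to the three truncation slots the index $s/2$ for each factor $i<k$ and the index $s$ for each factor $i>k$, and give this term coefficient $\epsilon=+1$; every other permutation, and every other choice of $(s',s'',s''')$ with entries in $\{s/2,s,2s\}$, is given coefficient $0$. This is a representation of exactly the stated type — in fact it uses only $\epsilon\in\{0,1\}$ and $s',s'',s'''\in\{s/2,s\}$, so the extra room in the statement (the value $2s$ and the sign $-1$) is simply not needed here and is kept available for later use. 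I do not expect any analytic difficulty: the lemma is purely combinatorial, and the one point requiring care is the bookkeeping — checking that the tie-breaking rule makes the four blocks a true partition with nothing double-counted or omitted, and that the interchange of summation with convolution is legitimate, both of which are immediate from the local finiteness recorded above.
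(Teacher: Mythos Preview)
Your argument is correct. The paper's own proof is the single sentence ``Follows immediately from the inclusion--exclusion principle,'' so your write-up is a legitimate fleshing-out of what the authors leave implicit. The only mild difference is methodological: you partition the index set $(s_1,s_2,s_3,s_4)$ directly by the position of the leftmost maximum, which yields four terms with coefficients in $\{0,1\}$ and truncation indices in $\{s/2,s\}$; the phrasing ``inclusion--exclusion'' and the presence of $-1$ and $2s$ in the statement suggest the authors may have had in mind an alternating-sign expansion (e.g.\ via $T^i_s=\mathbb T^i_s-\mathbb T^i_{s/2}$ or via over-and-undercounting the maximal-scale event), but your partition argument is cleaner and, as you note, shows the extra room in the statement is not actually needed.
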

\begin{proof}
Follows immediately from the inclusion-exclusion principle.
\end{proof}

We make the following observation. Consider
\begin{multline*}
\ham^2(x)\cdot\varphi_s(x)=\hamp*\hamp(x)\cdot\varphi_s(x)+\\
+\hamm*\hamm(x)\cdot\varphi_s(x)
+2\hamp*\hamm(x)\cdot\varphi_s(x).
\end{multline*}
We have that $\hamp*\hamp(x)$ is a CZ kernel (\cite{PZ}) thus $\hamp*\hamp(x)\cdot\varphi_s(x)$ satisfies the conditions for a CZ building block except, possibly, the vanishing mean. By Lemma \ref{BT:5} the operators
\begin{equation*}
\mathbb K_{s_1,s_2}^M=\sum_{s_1\le s\le s_2}(\hamp*\hamp)\cdot\varphi_s
\end{equation*}
have their $\ell^2\to\ell^2$ norms bounded uniformly in $s_1,s_2,M$. Using
Lemma \ref{teltel}
we can write
\begin{equation*}
\mathbb K_{s_1,s_2}^M=\sum_{s\ge s_1}\tilde K_s^M
\end{equation*}
where $\tilde K_s^M$ are CZ building blocks with scales $s$.

Similar argument holds for $(\hamm*\hamm)\cdot\varphi_s$. Moreover,
\begin{equation*}
(\hamp*\hamm)\cdot\varphi_s=({\mathcal H}_s*\hamm)\varphi_s=({\mathcal H}_s\cdot\varphi_s)*\hamm+C_s
\end{equation*}
where the commutator $C_s$ is a CZ building block at scale $s$ (possibly without mean 0), with CZ constant $D_s\le c\,s^{-\gamma}$, for some $\gamma>0$, independent of $\alpha$.  The first equality in the above follows immediately from the support considerations. For the last assertion let us consider the following
\begin{align*}
C_s&=({\mathcal H}_s*\hamm)\varphi_s-({\mathcal H}_s\cdot\varphi_s)*\hamm\\
&=\sum_{y\in\mathbb Z}\mathcal H_s(y)\hamm(\,\cdot\,-y)\big(\varphi_s(\,\cdot\,)-\varphi_s(y)\big),
\end{align*}
where $s\in\mathbb P_M^+$, that is $s\ge M^{1-\delta}$. As a consequence
\begin{align*}
|C_s(x)|&\le\sum_{|y-x|\le M^{\alpha-1}}|\mathcal H_s(y)|\cdot\frac{1}{M^{\alpha-1-\delta}}\cdot\frac{|x-y|}{s}\\
&\le C \frac{1}{s^{1/\alpha}}\cdot\frac{1}{M^{\alpha-1-\delta}}\cdot\frac{M^{\alpha-1}}{s}\\
&=C \frac{M^\delta}{s^{1+1/\alpha}}\\
&\le C \frac{s^{\frac{\delta}{1-\delta}}}{s^{1+1/\alpha}}.
\end{align*}
Since $\rm{supp}\ C_s\subset[-2s,2s]$, we have
\begin{equation}\label{BT:17}
\|C_s\|_{\ell^2}^2\le\frac{C}{s^{1+\gamma}},\qquad\text{for some }\gamma>1/2.
\end{equation}
With similar, obvious, H\"older estimate, $C_s$ satisfies $(ii)_s\dots(iv)_s$ from the definition of a CZ building block.

By \eqref{BT:17}
the operators
\begin{equation*}
\mathbb C_{s_1,s_2}=\sum_{s_1\le s\le s_2} C_{s},
\end{equation*}
have  their $\ell^2\to\ell^2$ norms bounded uniformly in $s_1,s_2, M$. Consequently, by Lemma \ref{teltel}, we can write
down the representation
\begin{equation*}
\mathbb C_{s_1,s_2}=\sum_{s\ge s_1}\tilde  C_{s},
\end{equation*}
where $\tilde  C_{s}$ is a CZ building block. Thus we have proved

\begin{lemma}\label{HHdecomp}
We have the following identity
\begin{equation*}
\sum_{s_1\le s \le s_2}\ham^2(x)\cdot\varphi_s(x)=\hamp\cdot\big(w(\,\cdot\,/s_2)-w(\,2\,\cdot\,/s_1)\big)*\hamm(x)+
\mathbb K_{s_1}^M(x)
\end{equation*}
where $\mathbb K_{s_1}^M$ is a CZ kernel containing building blocks with scales larger than $s_1/2$.
\end{lemma}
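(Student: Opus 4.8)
The plan is to sum, over the dyadic scales $s\in[s_1,s_2]$, the pointwise splitting recorded just above the statement, $\ham^2\varphi_s=(\hamp*\hamp)\varphi_s+(\hamm*\hamm)\varphi_s+2(\hamp*\hamm)\varphi_s$, and to identify the three resulting sums. First I would record the telescoping identity $\sum_{s_1\le s\le s_2}\varphi_s=w(\cdot/s_2)-w(2\,\cdot/s_1)=:\Phi_{s_1,s_2}$, valid once $s_1$ is past the first scale $M^\theta$, so that every $\varphi_s$ in the range equals $\tilde w(\cdot/s)$; this is precisely the cutoff in the statement, and it turns each of the three sums into a product of $\Phi_{s_1,s_2}$ with the corresponding convolution kernel. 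It is worth noting, by support, that $(\hamm*\hamm)\Phi_{s_1,s_2}$ only sees scales $\lesssim M^{\alpha-1}$ while $(\hamp*\hamm)\Phi_{s_1,s_2}$ only sees scales $\gtrsim M^{1-\delta}$, so no hypothesis on the position of $s_1,s_2$ relative to $\mathbb P_M^-,\mathbb P_M^+$ is needed.

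For the two diagonal sums I would invoke the discussion recorded above together with Lemma \ref{teltel}. Since $\hamp*\hamp$ and $\hamm*\hamm$ are CZ kernels (\cite{PZ}), each summand $(\hamp*\hamp)\varphi_s$, respectively $(\hamm*\hamm)\varphi_s$, satisfies $(ii)_s$--$(iv)_s$ with a constant uniform in $s$ and $M$ (possibly without vanishing mean), and by Lemma \ref{BT:5} the partial sums $\sum_{s_1\le s\le s_2}(\hamp*\hamp)\varphi_s$ are bounded on $\ell^2$ uniformly in $s_1,s_2,M$; Lemma \ref{teltel} then rewrites each of them as a genuine CZ kernel whose building blocks have scales $>s_1/2$. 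Both are absorbed into $\mathbb K^M_{s_1}$.

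For the cross term $2\sum_{s_1\le s\le s_2}(\hamp*\hamm)\varphi_s$ I would use the commutator identity established above. By support $(\hamp*\hamm)\varphi_s=(\mathcal H_s*\hamm)\varphi_s$, since only the scale-$s$ block of $\hamp$ contributes at $|x|\asymp s$ ($\hamm$ being carried by scales $\le M^{\alpha-1}\ll s$), and then $(\mathcal H_s*\hamm)\varphi_s=(\mathcal H_s\varphi_s)*\hamm+C_s$ with $C_s=\sum_y\mathcal H_s(y)\hamm(\cdot-y)(\varphi_s(\cdot)-\varphi_s(y))$. The H\"older estimate already carried out gives $\|C_s\|_{\ell^2}^2\le Cs^{-1-\gamma}$ with $\gamma>1/2$ --- the gain coming exactly from $\hamm$ living at scales $\le M^{\alpha-1}$, tiny next to $s\ge M^{1-\delta}$ --- so $\sum_s\|C_s\|_{\ell^2\to\ell^2}\le\sum_s\|C_s\|_{\ell^1}\lesssim\sum_{s\ge s_1}s^{-\gamma/2}<\infty$, and Lemma \ref{teltel} converts $\sum_{s_1\le s\le s_2}C_s$ into a CZ kernel with blocks of scales $>s_1/2$, again placed in $\mathbb K^M_{s_1}$. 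For the surviving piece I would invoke the convention fixed at the start of the proof of Theorem \ref{BT:3}, which identifies $\mathcal H_s\varphi_s$ with $\varphi_s\cdot\hamp$; then $\sum_{s_1\le s\le s_2}(\mathcal H_s\varphi_s)*\hamm=(\hamp\cdot\sum_{s_1\le s\le s_2}\varphi_s)*\hamm=\hamp\cdot\Phi_{s_1,s_2}*\hamm$, which --- up to the harmless numerical factor, immaterial here --- is the main term of the statement.

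Collecting, $\mathbb K^M_{s_1}$ is the sum of the finitely many CZ kernels produced above, hence by Lemma \ref{CS} again a CZ kernel with building blocks of scales $>s_1/2$, while what is left is exactly $\hamp\cdot(w(\cdot/s_2)-w(2\,\cdot/s_1))*\hamm$. The step I expect to be delicate is the cross term: one needs the commutator bound with a power $\gamma>1/2$ precisely so that the partial sums $\mathbb C_{s_1,s_2}$ are bounded on $\ell^2$ \emph{uniformly} in $s_1,s_2,M$, which is what licenses the application of Lemma \ref{teltel}, and one must accept the identification of $\mathcal H_s\varphi_s$ with $\varphi_s\hamp$ --- exactly the simplification sanctioned in the proof of Theorem \ref{BT:3}. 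Everything else is routine bookkeeping with Lemmas \ref{BT:5} and \ref{teltel}.
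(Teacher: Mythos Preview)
Your argument is correct and follows essentially the same route as the paper: split $\ham^2\varphi_s$ into the two diagonal pieces and the cross piece, feed the diagonal pieces (and the commutator $C_s$ from the cross piece) into Lemma~\ref{teltel} after checking uniform $\ell^2$ boundedness via Lemma~\ref{BT:5} and the estimate \eqref{BT:17}, and telescope the surviving $(\mathcal H_s\varphi_s)*\hamm$ to produce the main term. One small slip: at the end you cite Lemma~\ref{CS} to combine the finitely many CZ kernels into $\mathbb K^M_{s_1}$, but Lemma~\ref{CS} is about convolutions; what you need is simply that a finite sum of CZ kernels with blocks of scales $\ge s_1$ is again such a kernel, which is immediate from the definition.
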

Observe, that if $s_2\le M^{1-\delta}/4$ then
\begin{equation*}
\hamp(x)\cdot\Big(w\Big(\frac{x}{s_2}\big)-w\Big(\frac{2\,x}{s_1}\Big)\Big)\equiv0,
\end{equation*}
by support considerations. Otherwise the support of the above product is contained in $[-8s_2,8s_2]$ (recall, that $\rm{supp }\ \hamm\subset[-M^{\alpha-1},M^{\alpha-1}]$, and $\alpha-1<1-\delta$). We can therefore assume that $\mathbb K_{s_1}^M$ is composed of building blocks with scales no greater that $8s_2$, with the last one possibly without vanishing mean.
As a corollary we obtain

\begin{lemma}\label{BT:10}
Assume that operator $T_M$ of the form \eqref{BT:1} does not contain the $\delta_0$ component.
If a CZ kernel $K^M$ contains CZ building blocks only of scales $s$, with $s_1^{\frac{\alpha-1}{\alpha}+\delta}\le s\le s_1$, then $T_{M,s_1}*K^M$ is a CZ building block with scale no greater than $9s_1$.
\end{lemma}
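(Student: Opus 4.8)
The plan is to expand $T_{M,s_1}$ according to the defining form of $T_M$. Since $T_M$ carries no $\delta_0$ term, fix a representation $T_M=\beta\,\ham+\gamma\,\ham^2+\tilde K^M$ with $|\beta|,|\gamma|,\|\tilde K^M\|_{CZ}\le C\|T_M\|_{A_M}$, so that
\[
T_{M,s_1}*K^M=\beta\,\mathcal H_{s_1}*K^M+\gamma\,\big(\ham^2\varphi_{s_1}\big)*K^M+\big(\tilde K^M\varphi_{s_1}\big)*K^M .
\]
I will show that each of the three summands is by itself a CZ building block of scale at most $9s_1$; the sum is then such a block as well. Vanishing mean is automatic for every summand: each is a convolution against $K^M$, and $\sum_xK^M(x)=0$ forces the total mass of any such convolution to vanish.

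The first summand is immediate. By the convention in force, $\ham\varphi_{s_1}$ is the transform block $\mathcal H_{s_1}$, and the hypothesis places the scales of $K^M$ in exactly the band $s_1^{(\alpha-1)/\alpha+\delta}\le s\le s_1$; hence Lemma \ref{Conv} (\ref{BT:14}) gives that $\mathcal H_{s_1}*K^M$ is a CZ building block of scale $4s_1$ with constant $\le C|\beta|\,\|K^M\|_{CZ}$. For the third summand, note first that $\tilde K^M\varphi_{s_1}$ is supported in $|x|\le 2s_1$ and that, in its decomposition $\tilde K^M=\sum_j\tilde K^M_j$ into CZ blocks, only the pieces with dyadic $j\ge s_1/2$ can be nonzero on the support of $\varphi_{s_1}$. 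Summing the bounds $\|\tilde K^M_j\|_{\ell^2}\le\|\tilde K^M\|_{CZ}\,j^{-1/2}$ over these $j$ (a geometric series in $j^{-1/2}$) gives $\|\tilde K^M\varphi_{s_1}\|_{\ell^2}\le C\|\tilde K^M\|_{CZ}\,s_1^{-1/2}$, and the same argument applied to $\tilde K^M\varphi_{s_1}(\cdot+h)-\tilde K^M\varphi_{s_1}$ — using property $(iv)_j$ of the pieces and $\|\varphi_{s_1}(\cdot+h)-\varphi_{s_1}\|_\infty\le C|h|/s_1$ for the cutoff — gives the corresponding H\"older estimate. Thus $\tilde K^M\varphi_{s_1}$ satisfies $(ii)_{2s_1},(iii)_{2s_1},(iv)_{2s_1}$ with constant $\le C\|T_M\|_{A_M}$, though possibly without mean zero; since convolution by $K^M$ acts on $\ell^2$ with norm $\le C\|K^M\|_{CZ}$, all three estimates survive the operation $*K^M$, which also restores the vanishing mean, and so $(\tilde K^M\varphi_{s_1})*K^M$ is a genuine CZ building block of scale $\le 3s_1$.

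The second summand carries the real content. I would apply Lemma \ref{HHdecomp} with the single scale $s_1=s_2$, obtaining $\ham^2\varphi_{s_1}=(\hamp\varphi_{s_1})*\hamm+\mathbb K^M_{s_1}$, where $\mathbb K^M_{s_1}$ is a CZ kernel all of whose building blocks have scales in $(s_1/2,8s_1]$. By Lemma \ref{CS}, $\mathbb K^M_{s_1}*K^M$ is again a CZ kernel; since the scales of $\mathbb K^M_{s_1}$ are comparable to $s_1$ and those of $K^M$ are $\le s_1$, all of its blocks have scale $\le Cs_1$, and as the product is supported in $|x|\le 8s_1+s_1=9s_1$ it is a single CZ building block of scale $\le 9s_1$. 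The remaining piece $(\hamp\varphi_{s_1})*\hamm*K^M$ vanishes for $s_1<M^{1-\delta}/4$, and otherwise — treating $\hamp\varphi_{s_1}$ as the transform block $\mathcal H_{s_1}$ — equals $\mathcal H_{s_1}*(\hamm*K^M)$. By step (1) of the proof of Lemma \ref{BT:9}, $\hamm*K^M$ is a CZ kernel; its blocks have scales between $\sim M^{\alpha-1-\delta}$ and $2s_1$. The standing assumption $\frac{\alpha-1}{\alpha}+\delta<\alpha-1-\delta$ and $s_1\le M$ give $s_1^{(\alpha-1)/\alpha+\delta}\le M^{(\alpha-1)/\alpha+\delta}<M^{\alpha-1-\delta}$, so the sub-kernel of $\hamm*K^M$ carried by scales $\le s_1$ meets the hypothesis of Lemma \ref{Conv} (\ref{BT:14}), and convolving it with $\mathcal H_{s_1}$ yields a CZ building block of scale $4s_1$; the one remaining block of $\hamm*K^M$, at scale $2s_1$, is absorbed directly — $\|\mathcal H_{s_1}\|_{\ell^2\to\ell^2}\le C$ and the support bound give a CZ building block of scale $\le 3s_1$ with constant $\le C\|K^M\|_{CZ}$. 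In every case the outcome is a CZ building block of scale $\le 9s_1$, and summing the three summands finishes the proof.

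I expect the main obstacle to be purely the scale bookkeeping in the second summand: one must check that every sub-convolution coming out of Lemmas \ref{HHdecomp}, \ref{Conv} and \ref{CS} keeps all of its scales inside the narrow admissible band $s_1^{(\alpha-1)/\alpha+\delta}\le s\le s_1$ (or, at worst, $\le 9s_1$) required by those lemmas — which is precisely the role of the hypothesis $\frac{\alpha-1}{\alpha}+\delta<\alpha-1-\delta$ — and that the $\ell^2$ normalizations of CZ blocks are not destroyed by convolution. The latter holds because the rough factors $\ham,\hamp,\hamm$ occur only convolved against genuine CZ kernels, or in combinations such as $\hamm*\hamm$ and $\hamp*K^M$ that are already known to be CZ kernels from \cite{PZ} and the proof of Lemma \ref{BT:9}. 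The boundary scales $s_1\approx M^{1-\delta}$ and the smallest scale $s_1\approx M^{\alpha-1-\delta}$ need the same minor extra care as their counterparts in \cite{PZ}, treating a truncated transform block exactly like $\mathcal H_{s_1}$.
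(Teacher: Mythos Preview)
Your proof is correct and follows the same three-part decomposition as the paper, applying Lemma \ref{HHdecomp} for the $\ham^2\varphi_{s_1}$ piece and Lemma \ref{Conv}\,(\ref{BT:14}) for the transform blocks. The one notable difference is your association of the cross term: you write $(\hamp\varphi_{s_1})*\hamm*K^M$ as $\mathcal H_{s_1}*(\hamm*K^M)$ and invoke the CZ-kernel structure of $\hamm*K^M$ from step (1) of Lemma \ref{BT:9}, whereas the paper associates the other way, first forming $S*K^M$ (a single block of scale $4s_1$ by Lemma \ref{Conv}\,(\ref{BT:14})) and then convolving with $\hamm$, which contributes only support $\le M^{\alpha-1}\le s_1$ and a bounded $\ell^2\to\ell^2$ factor. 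The paper's grouping is shorter and avoids the scale bookkeeping you anticipate in your last paragraph; your route works too, but note that the top scale of $\hamm*K^M$ is $4s_1$ rather than $2s_1$, so there are a couple of overshoot blocks to absorb, not just one.
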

\begin{proof} The truncated operator $T_{M,s_1}$ consists of 3 parts, we are going to consider the component $\ham^2(x)\cdot\varphi_{s_1}(x)$. The other two components are: a CZ block for which the conclusion is obvious, and a truncated $\ham\cdot\varphi_{s_1}$ which can be treated in a similar way. We apply representation from Lemma \ref{HHdecomp} with $s_2=s_1$. By the observation above, the CZ part of the representation consists of CZ building blocks of scales between $s_1/2$ and $8s_1$. These, convolved with $K^M$ produce a building block of scale no greater that $9s_1$. Now,
\begin{equation*}
S(x)=\hamp(x)\cdot\Big(w\Big(\frac{x}{s_2}\big)-w\Big(\frac{2\,x}{s_1}\Big)\Big)
\end{equation*}
contains, essentially, one block and, by Lemma \ref{Conv} (\ref{BT:14}) $S*K_M$ is a CZ block of scale $4s_1$, and consequently $S*K_M*\hamm$ is a building block of scale no greater that $5s_1$.
\end{proof}

We now return to the proof of Theorem \ref{BT:3}. Before we proceed we make the following observation. Without loss of generality we can assume that all four operators appearing in \eqref{BT:4} have no $\delta_0$ component. Let us justify this.
Write
\begin{equation*}
T_M^1=\tilde T_M^1+\lambda_1\delta_0,
\end{equation*}
that is, isolate the $\delta_0$ part from $T_M^1$. We have
\begin{equation*}
T_M^1\cdot T_M^2\cdot T_M^3\cdot T_M^4=\tilde T_M^1\cdot T_M^2\cdot T_M^3\cdot T_M^4+\lambda_1\cdot T_M^2\cdot T_M^3\cdot T_M^4=I+II.
\end{equation*}
$A_M$ is a Banach algebra, so
\begin{align*}
\|II\|_{A_M}&\le |\lambda_1|\,\| T_M^2\|_{A_M}\,\| T_M^3\|_{A_M}\,\| T_M^4\|_{A_M}\\
\text{by Lem. \ref{BT:6}}&\le(\|T_M^1\|_{A_M}\cdot\epsilon(M)+C\| T_M^1\|_{\ell^2\to\ell^2})\times\\
&\qquad\times\| T_M^2\|_{A_M}\,\| T_M^3\|_{A_M}\,\| T_M^4\|_{A_M}.
\end{align*}
Thus for the part $II$ we obtain desired estimate \eqref{BT:4}. It is therefore enough to estimate part $I$, that is with $T_M^1$ without the $\delta_0$ component.
We then iterate for the rest of the operators. That justifies the above observation.

 We proceed with the proof of Theorem \ref{BT:3}. We are actually going to prove that
 \begin{equation*}
   P=T_M^1\cdot T_M^2\cdot T_M^3\cdot T_M^4
 \end{equation*}
is a CZ operator, with the norm $\|P\|_{CZ}$ bounded by the required right hand side of \eqref{BT:4}. We will do so by showing, that $P$ decomposes into CZ building blocks, possibly without vanishing means. Since, clearly,
\begin{equation*}
  \|P\|_{\ell^2\to\ell^2}\le C\,\|P\|_{CZ}\le \text{ right hand side of }\eqref{BT:4}
\end{equation*}
the  Lemma \ref{tele} applies and the proof would be finished.

Using Lemma \ref{BT:7} we investigate operators of the form
\begin{equation*}
\tilde Z=T_{M,s_1}^1*\mathbb T_{M,s_2}^2* \mathbb T_{M,s_3}^3*\mathbb T_{M,s_4}^4.
\end{equation*}
We will show that $\tilde Z$ is a CZ building block, possibly without vanishing means, with
\begin{equation}\label{ztilde}
\|\tilde Z\|_{CZ}\le C\cdot\|T^1_M\|_{A_M}\cdot\|T^2_M\|_{A_M}\cdot\|T^3_M\|_{A_M}\cdot\|T^4_M\|_{\ell^2\to\ell^2}.
\end{equation}

It will suffice to prove that
\begin{equation*}
Z=T_{M,s_1}^1*\mathbb T_{M,s_2}^2*\mathbb T_{M,s_3}^3
\end{equation*}
is a CZ building block of scale $s_1$, possibly without vanishing means, with
\begin{equation*}
\|Z\|_{CZ}\le C\cdot\|T^1_M\|_{A_M}\cdot\|T^2_M\|_{A_M}\cdot\|T^3_M\|_{A_M}
\end{equation*}
This would imply \eqref{ztilde}
\begin{align*}
\|\tilde Z\|_{CZ}&\le C\|Z\|_{CZ}\cdot\|T^4_{M,s_4}\|_{\ell^2\to\ell^2}\\
&\le C\cdot\|T^1_M\|_{A_M}\cdot\|T^2_M\|_{A_M}\cdot\|T^3_M\|_{A_M}\cdot\|T^4_M\|_{\ell^2\to\ell^2}.
\end{align*}

For the operators $\mathbb T_{M,s_2}^2$ and $\mathbb T_{M,s_3}^3$, we use representation given by Lemma \ref{HHdecomp}, namely for $i=2,3$
\begin{align*}
\mathbb T_{M,s_i}^i&=\sum_{M^\theta\le s\le s_i}\tilde{\mathcal H}_s+\sum_{M^\theta\le s\le s_i}\tilde{\tilde{\mathcal H}}_s^+*\hamm+K^{M,i}\\
&=\tilde {\mathbb H}_{M,s_i}+\tilde{\tilde{\mathbb H}}_{M,s_i}^+*\hamm+K^{M,i},
\end{align*}
where $K^{M,i}$ satisfy conditions from the observation stated after Lemma \ref{HHdecomp}. $\tilde{\mathcal H}_s=\ham\cdot\varphi_s$ and  $\tilde{\tilde{\mathcal H}}_s^+=\ham^+\cdot\varphi_s$. Tildes underscore the fact that, due to truncation, the Hilbert transform blocks have different cutoff functions $\varphi$ (but with all other properties preserved). In particular $\tilde{\tilde{\mathcal H}}_{s_i}^+$ vanish for $s_i<M^{1-\delta}/4$.
Consider the operator
\begin{align*}
&\big(\tilde {\mathbb H}_{M,s_2}+\tilde{\tilde{\mathbb H}}_{M,s_2}^+*\hamm+K^{M,2}\big)*\big(\tilde {\mathbb H}_{M,s_3}+\tilde{\tilde{\mathbb H}}_{M,s_3}^+*\hamm+K^{M,3}\big)\\
&\quad=\tilde {\mathbb H}_{M,s_2}*\tilde {\mathbb H}_{M,s_3}+
\big(\tilde {\mathbb H}_{M,s_2}*\tilde{\tilde{\mathbb H}}_{M,s_3}^+*\hamm+\tilde {\mathbb H}_{M,s_3}*\tilde{\tilde{\mathbb H}}_{M,s_2}^+*\hamm\big)+\\
&\qquad+\tilde{\tilde{\mathbb H}}_{M,s_2}^+*\tilde{\tilde{\mathbb H}}_{M,s_3}^+*\hamm*\hamm+\text{   terms containing $K^{M,i}$}\\
&\quad= I+II+III+IV.
\end{align*}

We now decompose
\begin{equation*}
  \tilde {\mathbb H}_{M,s_3}=\tilde {\mathbb H}_{M,s_3}^-+\tilde {\mathbb H}_{M,s_3}^+,
\end{equation*}
(grouping transform blocks of scales from $\mathbb P_M^-$ and $\mathbb P_M^+$). Then $\hamm*\tilde {\mathbb H}_{M,s_3}^-$ is a CZ kernel satisfying the assumptions of Lemma \ref{BT:10}. Similarly $\tilde{\tilde{\mathbb H}}_{M,s_3}^+*\tilde {\mathbb H}_{M,s_2}$ is also a CZ kernel satisfying the assumptions of that lemma. Consequently,
$T_{M,s_1}^1*\,II$ is a CZ block of scale $s_1$. Similar argument works for $T_{M,s_1}^1*\,III$, because of the factor $\hamm*\hamm$ which is a CZ kernel satisfying the assumptions of Lemma \ref{Conv} (\ref{BT:14}). We are left with the operator
\begin{equation*}
  P=T_{M,s_1}^1*\tilde {\mathbb H}_{M,s_2}*\tilde {\mathbb H}_{M,s_3},
\end{equation*}
and we want to prove that it is a CZ building block of scale $2s_1$. We apply Lemma \ref{HHdecomp} to $T_{M,s_1}^1$. We get
\begin{align*}
  P &=\tilde {\mathbb H}_{M,s_2}*\tilde {\mathbb H}_{M,s_3}* \tilde{\mathcal H}_{s_1}+\tilde {\mathbb H}_{M,s_2}*\tilde {\mathbb H}_{M,s_3}* \tilde{\mathcal H}_{s_1}*\hamm+\\
  &\quad+ \text{ products containing CZ building blocks of scales $s_1$}\\
  &=I_P+II_P+III_P.
\end{align*}
Now, $III_P$ is a CZ building block because of supports, while $I_P,\,II_P$ are appropriate building blocks by argument similar to that in the proof of the fact that $A_M$ is a Banach algebra (Lemma \ref{BT:9}).

To deal with $IV$ it is enough to prove that $T^1_{M,s_1}*K^{M,i}$ is a CZ building block of scale $\le 4s_1$. Applying Lemma \ref{BT:10} to $T^1_{M,s_1}$ we see, that it is sufficient to prove the claim for ${\mathbb H}_M^-*\tilde {\mathbb H}_{M,s_1}^+*K^{M,i}$. From estimates in (2)
of Lemma \ref{BT:9} we know, that $\tilde {\mathbb H}_{M,s_1}^+*K^{M,i}$ is a CZ building block of scale $s_1$. As a consequence we get the estimate in the case $IV$ and the theorem follows.

\end{proof}
We conclude the study of the singularity of the inverse operators with the following theorem.
\begin{theorem}\label{BT:11}
Suppose for some fixed $\lambda,\beta\in\mathbb C$ the operators
\begin{equation*}
\big(\lambda\delta_0+\beta\ham\big)^{-1}
\end{equation*}
are bounded on $\ell^2$, uniformly in $M\ge M_0$. By Theorem \ref{ISR:2} they have the form
\begin{equation*}
\big(\lambda\delta_0+\beta\ham\big)^{-1}=\tilde\lambda_M\delta_0+\tilde\beta_M\ham+\tilde\gamma_M\ham^2+K^M,
\end{equation*}
and are bounded in the algebra $A_M$, uniformly in $M\ge M_0$. In this situation $\tilde\gamma_M$ are bounded away from $0$: $|\tilde\gamma_M|\ge\gamma>0$.
\end{theorem}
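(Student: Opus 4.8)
The plan is to exploit the identity $(\lambda\delta_0+\beta\ham)*(\lambda\delta_0+\beta\ham)^{-1}=\delta_0$ together with the principle that, inside $A_M$, the term $\ham^2$ is the \emph{only} obstruction to being a Calder\'on--Zygmund kernel. We may assume $\beta\neq0$, since otherwise $\lambda\delta_0+\beta\ham=\lambda\delta_0$ and there is nothing to prove. Since $\|(\lambda\delta_0+\beta\ham)^{-1}\|_{A_M}$ is bounded uniformly in $M\ge M_0$, Theorem \ref{ISR:2} lets us fix a representation
\begin{equation*}
(\lambda\delta_0+\beta\ham)^{-1}=\tilde\lambda_M\delta_0+\tilde\beta_M\ham+\tilde\gamma_M\ham^2+K^M,\qquad |\tilde\lambda_M|+|\tilde\beta_M|+|\tilde\gamma_M|+\|K^M\|_{CZ}\le C .
\end{equation*}
Multiplying out and using that $\ham^3$ and $\ham*K^M$ are CZ kernels of the form \eqref{ISR:1} with CZ-norm $\le C$ (Lemma \ref{BT:9}, and $\|\ham\|_{A_M}\le1$), the product collapses to
\begin{equation}\label{gammabd}
\lambda\tilde\lambda_M\,\delta_0+p_M\,\ham+q_M\,\ham^2+\tilde K^M=\delta_0,
\end{equation}
where $p_M=\lambda\tilde\beta_M+\beta\tilde\lambda_M$ and $q_M=\lambda\tilde\gamma_M+\beta\tilde\beta_M$ are bounded, and $\|\tilde K^M\|_{CZ}\le C$ uniformly.

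Next I would extract three pieces of information from \eqref{gammabd}. Evaluating it at $x=0$ and using $\ham(0)=0$, $|\ham^2(0)|\lesssim M^{-(\alpha-1-\delta)/\alpha}$, and $|\tilde K^M(0)|\lesssim\|\tilde K^M\|_{CZ}\,M^{-(\alpha-1-\delta)/2}$ (the estimates behind Lemma \ref{BT:6}) gives $\lambda\tilde\lambda_M\to1$; in particular $\lambda\neq0$ and $\tilde\lambda_M\to1/\lambda$. Then, isolating $q_M\ham^2$ in \eqref{gammabd} and taking $\ell^1\to\ell^{1,\infty}$ norms, the right-hand side is bounded uniformly in $M$: $\ham$ is of weak type $(1,1)$ uniformly in $M$ (Appendix), a dyadic sum of CZ building blocks is of weak type $(1,1)$ with norm $\lesssim\|\cdot\|_{CZ}$ uniformly in its scales, and the coefficients are bounded. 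Since $\|\ham^2\|_{\ell^1\to\ell^{1,\infty}}\to\infty$ by \cite{PZ}, Section 5, this forces $q_M\to0$.

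The remaining, crucial point is that $p_M\to0$, and \eqref{gammabd} by itself does not give it: it only yields $p_M\,\ham=(1-\lambda\tilde\lambda_M)\delta_0-q_M\ham^2-\tilde K^M$, whose $\ell^1\to\ell^{1,\infty}$ norm I cannot control because of the $\ham^2$ on the right. The key move is to convolve \eqref{gammabd} once more with $\ham$, obtaining
\begin{equation*}
p_M\,\ham^2=(1-\lambda\tilde\lambda_M)\,\ham-q_M\,\ham^3-\ham*\tilde K^M.
\end{equation*}
Now every term on the right is uniformly of weak type $(1,1)$: $1-\lambda\tilde\lambda_M\to0$ and $\ham$ is uniformly weak $(1,1)$; $\ham^3$ is a CZ kernel of bounded CZ-norm (Lemma \ref{BT:9}) and $q_M\to0$; and $\ham*\tilde K^M$ is a CZ kernel of bounded CZ-norm (Lemma \ref{BT:9} with $\tilde K^M$ in place of $K^M$). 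Taking $\ell^1\to\ell^{1,\infty}$ norms and again invoking $\|\ham^2\|_{\ell^1\to\ell^{1,\infty}}\to\infty$ forces $p_M\to0$. The rest is algebra: $\tilde\beta_M=(p_M-\beta\tilde\lambda_M)/\lambda\to-\beta/\lambda^2$, and hence, using $q_M\to0$, $\tilde\gamma_M=(q_M-\beta\tilde\beta_M)/\lambda\to\beta^2/\lambda^3\neq0$, so $|\tilde\gamma_M|\ge|\beta|^2/(2|\lambda|^3)>0$ for all $M$ beyond some $M_1$.

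I expect the main obstacle to be exactly the step $p_M\to0$: the useful idea — convolving with $\ham$, which converts the harmful $\ham^2$ into the CZ kernel $\ham^3$ while turning the unknown $p_M\ham$ into $p_M\ham^2$, the unique term still carrying the weak-$(1,1)$ blow-up — is what makes the whole scheme close. A minor, recurring technical point is the (standard, essentially contained in \cite{PZ}) fact that a dyadic sum of CZ building blocks satisfies a weak $(1,1)$ bound proportional to $\|\cdot\|_{CZ}$, uniformly in the lower cutoff $\mathcal J$ and in the number of scales present.
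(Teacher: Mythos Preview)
Your approach is correct and takes a genuinely different route from the paper's.

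The paper first decomposes $\ham^2=(\hamp)^2+(\hamm)^2+2\,\hamp*\hamm$ and absorbs the two squares (which are CZ kernels) into the CZ remainder, reducing your identity to
\[
p_M\,\ham+2q_M\,(\hamp*\hamm)=(1-\lambda\tilde\lambda_M)\,\delta_0+\tilde K^M.
\]
It then observes that $\ham$ and $\hamp*\hamm$ have \emph{disjoint supports}, and appeals to the explicit $\ell^2$ lower bounds of \cite{PZ}, equations~(5.6), to argue that if the left side is to satisfy the CZ building-block condition $(iii)_s$ with a uniform constant (as the right side forces), then $p_M\to0$ and $q_M\to0$ separately.

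Your argument instead uses weak type $(1,1)$ as the yardstick: $\ham$ and CZ kernels of the form \eqref{ISR:1} are uniformly of weak type $(1,1)$, while $\ham^2$ is not; isolating $q_M\ham^2$ gives $q_M\to0$ directly, and then convolving once more with $\ham$ promotes $p_M\ham$ to $p_M\ham^2$ --- now the unique term carrying the blow-up --- while $\ham^3$ and $\ham*\tilde K^M$ become CZ. This is clean and conceptual, and avoids both the disjoint-support trick and the explicit $\ell^2$ formulas of \cite{PZ}; the paper's approach, by contrast, stays entirely within the $\ell^2$/CZ framework already set up and does not invoke any weak-type machinery. One point to watch: you need $\|\ham^2\|_{\ell^1\to\ell^{1,\infty}}\to\infty$ as $M\to\infty$, not merely that no uniform bound exists; the explicit construction in \cite{PZ}, Section~5, does give a quantitative lower bound growing with $M$ for the specific operators fixed here, so this is fine, but it is a marginally sharper input than what the present paper states in its introduction.
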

\begin{proof}
We have
\begin{align*}
\delta_0&=\big(\lambda\delta_0+\beta\ham\big)\big(\tilde\lambda_M\delta_0+\tilde\beta_M\ham+\tilde\gamma_M\ham^2+K^M\big)\\
&=(\lambda\tilde\lambda_M)\delta_0+(\beta\tilde\lambda_M+\tilde\beta_M\lambda)\ham+(\lambda\tilde\gamma_M+\beta\tilde\beta_M)\ham^2+\\
&\qquad+\ham^3\,(\beta\cdot\tilde\gamma_M)+\lambda K^M+\beta\ham K^M.
\end{align*}
Thus
\begin{equation}\label{BT:12}
(\beta\tilde\lambda_M+\tilde\beta_M\lambda)\ham+2(\lambda\tilde\gamma_M+\beta\tilde\beta_M)(\hamp*\hamm)=(1-\lambda\tilde\lambda)\delta_0+\tilde K^M,
\end{equation}
where $\tilde K^M$ is a CZ operator with $\|\tilde K^M\|_{CZ}\le C$, uniformly in $M\ge M_0$. We have
\begin{multline*}
(1-\lambda\tilde\lambda_M)+\langle\tilde K^M\delta_0,\delta_0\rangle=\\
=(\lambda\tilde\gamma_M+\beta\tilde\beta_M)\langle(\hamp*\hamm)\delta_0,\delta_0\rangle+(\beta\tilde\lambda_M+
\tilde\beta_M\lambda)\langle\ham\delta_0,\delta_0\rangle.
\end{multline*}
Observe
\begin{gather*}
\langle\ham\delta_0,\delta_0\rangle=0,\\
\langle(\hamp*\hamm)\delta_0,\delta_0\rangle=\langle\hamm,\hamp\rangle=0,\\
\langle\tilde K^M\delta_0,\delta_0\rangle\to0,
\end{gather*}
so $1-\lambda\tilde\lambda_M\to0$ as $M\to\infty$. In particular, $\tilde\lambda_M$ are bounded away from $0$. Observe that
$\ham$ and $\hamp*\hamm$ have disjoint supports. Moreover, from equations (5.6) in \cite{PZ} it
follows, that for the left hand side of \eqref{BT:12} to satisfy condition $(iii)_s$ uniformly in
$M$, the following are necessary
\begin{equation*}
\beta\tilde\lambda_M+\tilde\beta_M\lambda\to0\quad\text{and}\quad\lambda\tilde\gamma_M+\beta\tilde\beta_M\to0\quad\text{as}\ M\to\infty,
\end{equation*}
and the conclusion follows.
\end{proof}

\section{Appendix}

We would like to conclude with the following theorem. Let us recall our principal operator \eqref{BT:13}
\begin{equation*}
\ham f(x)=\sum_{\twoline{M^\theta\le s\le M}{s-\text{dyadic}}}
\sum_{m\ge1}\phi_s\Big(\frac{m^\alpha}{s}\Big)\frac{f(x-[m^\alpha])-f(x+[m^\alpha])}{m}.
\end{equation*}
\begin{theorem*} For any $0<\theta<1$ the operator $\ham$ is of weak type $(1,1)$, uniformly in $M$.
\end{theorem*}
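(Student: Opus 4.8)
The plan is to run a Calder\'on--Zygmund argument on $\zet$, the subtle point being that the kernel of $\ham$ is \emph{not} a CZ kernel in the sense defined above. Indeed, each transform block $\mathcal H_s$ is odd and supported in $[-2s,2s]$, but it is carried by the thin set $\{\pm[m^\alpha]:m^\alpha\simeq s\}$, so that $\sum_x|\mathcal H_s(x)|^2\simeq s^{-1/\alpha}$, much larger than the $s^{-1}$ demanded by $(iii)_s$, and the smoothness axiom $(iv)_s$ fails badly. Since $\ham$ is a sum of $\simeq\log M$ such blocks and $\|\mathcal H_s\|_{\ell^1\to\ell^1}\le C$, no estimate that loses a constant per scale can be afforded: genuine cancellation between the scales has to be extracted, and this is the main obstacle (and the only place where arithmetic enters).

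The starting point is the uniform bound $\|\ham\|_{\ell^2\to\ell^2}\le C$, which is classical (implicit in \cite{PZ}, following the circle-method analysis of discrete singular integrals along $\{[m^\alpha]\}$); it disposes of the good function in the Calder\'on--Zygmund decomposition of $f\in\ell^1$ at height $\lambda$. For the bad function I would first split $\mathcal H_s=\widetilde{\mathcal H}_s+\mathcal R_s$, where $\widetilde{\mathcal H}_s(x)=c_\alpha\,x^{-1}\varphi_s(x)$ is the smooth truncated Hilbert kernel at scale $s$, with $c_\alpha$ (and a harmless adjustment of $\varphi_s$) chosen so that $\widetilde{\mathcal H}_s$ reproduces the mass of $\mathcal H_s$ on each of the ``gap intervals'' between consecutive values $[m^\alpha]$, whose lengths are $\simeq s^{1-1/\alpha}$. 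Then $\widetilde{\mathcal H}_s$ is a genuine CZ building block of scale $s$ with constant $\le C_\alpha$, so $\sum_{M^\theta\le s\le M}\widetilde{\mathcal H}_s$ is a CZ kernel with $\|\cdot\|_{CZ}\le C_\alpha$ \emph{uniformly in $M$}, and the corresponding convolution operator is of weak type $(1,1)$ uniformly in $M$ by the Calder\'on--Zygmund theorem.

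It remains to treat $\mathcal R^M=\sum_{M^\theta\le s\le M}\mathcal R_s$. Each $\mathcal R_s$ is supported in $[-2s,2s]$, has $\|\mathcal R_s\|_{\ell^1}\le C$ and $\sum_x|\mathcal R_s(x)|^2\le Cs^{-1/\alpha}$, and has vanishing mean on every gap interval. The last property, together with the choice of $\widetilde{\mathcal H}_s$, makes $\widehat{\mathcal R_s}$ small at low frequencies, while van der Corput / Weyl estimates for the exponential sums $\sum_m m^{-1}\phi_s(m^\alpha/s)\,e^{-2\pi i[m^\alpha]\xi}$ give $|\widehat{\mathcal R_s}(\xi)|\le Cs^{-\eta}$ for $\xi$ outside a union of major arcs, for some $\eta=\eta(\alpha)>0$. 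Consequently the minor-arc part satisfies $\|\mathcal R^{\mathrm{min}}_{2^j}\|_{\ell^2\to\ell^2}\le C2^{-\eta j}$ while, crudely, $\|\mathcal R^{\mathrm{min}}_{2^j}\|_{\ell^1\to\ell^1}\le C2^{\epsilon j}$ for any $\epsilon>0$; a sum of operators whose kernels are supported in $[-2^j,2^j]$ and have geometrically decaying $\ell^2$ norms and at most geometrically growing $\ell^1$ norms is of weak type $(1,1)$ with a bound depending only on these two exponents --- the Christ--Seeger interpolation between the $\ell^1$- and $\ell^2$-estimates at the cross-over scale --- and the window $M^\theta\le s\le M$ drops out because the resulting bound is essentially a fixed geometric mean of the two norms, so $\sum_s\mathcal R^{\mathrm{min}}_s$ is of weak type $(1,1)$ uniformly in $M$. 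The major-arc pieces $\sum_s\mathcal R^{\mathrm{maj}}_s$ reduce, via the circle-method analysis of \cite{PZ} (cf.\ \cite{IW}), to superpositions over denominators $q$ of smooth multipliers with rapid decay in $q$; these carry the residue-class structure of $\{[m^\alpha]\}$ and are of weak type $(1,1)$ uniformly in $M$ as well. Combining the three contributions finishes the proof.

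Thus the only genuinely new ingredient is the gain $s^{-\eta}$ for $\widehat{\mathcal R_s}$ off the major arcs --- quantitative equidistribution of $\{[m^\alpha]\}$ --- together with the standard major-arc bookkeeping; everything else is the Calder\'on--Zygmund packaging and the CZ-building-block calculus developed in this paper and in \cite{PZ}. I expect the two points needing some care to be: verifying that $\widetilde{\mathcal H}_s$ can be chosen so as to annihilate the low frequencies of $\mathcal R_s$ uniformly in $s$, and checking that the Christ--Seeger step is genuinely uniform in the truncation $M$.
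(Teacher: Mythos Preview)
Your route is genuinely different from the paper's, and it contains a real gap.

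\medskip

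\textbf{What the paper does.} The paper never splits the kernel into a smooth CZ part and a rough remainder, and it never goes to the Fourier side. Following \cite{UZ} (Christ's method for rough operators, adapted to the discrete setting), it performs a \emph{scale--dependent} Calder\'on--Zygmund decomposition of the input: after the usual $f=g+\sum_k b_k$, each bad piece is further split, \emph{separately at every dyadic scale $s$}, as
\[
b_k=b_k^{(s)}+B_k^{(s)}+E_k^{(s)}(f),
\]
with $b_k^{(s)}=b_k\mathbbm{1}_{\{|f|>\lambda s^{1/\alpha}\}}$, $E_k^{(s)}$ a conditional expectation, and $B_k^{(s)}$ the mean--zero remainder bounded by $\lambda s^{1/\alpha}$. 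The first piece is handled by pure counting: $\hams$ has only $\sim s^{1/\alpha}$ nonzero coefficients, so $\{\hams*b_k^{(s)}\neq0\}$ has measure $\lesssim s^{1/\alpha}|\{|b_k|>\lambda s^{1/\alpha}\}|$, and summing over dyadic $s$ reproduces $\|b_k\|_{\ell^1}/\lambda$. The mean--zero pieces $B_k^{(s)}$ are estimated in $\ell^2$ via a structural lemma for $\mathcal H_{s_1}*\mathcal H_{s_2}$ (a H\"older part $\rho_{s_1,s_2}$ plus a piece $k_{s_1,s_2}$ of very small support plus a $\delta_0$), which is entirely physical--space. The expectations $E_k^{(s)}$ are dealt with by duality and the maximal truncation $\sup_s|\mathcal S_s*h|$. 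No exponential sums, no circle method.

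\medskip

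\textbf{The gap in your argument.} Your ``major--arc'' step is not applicable here. For $\alpha\notin\mathbb Z$ the sequence $[m^\alpha]$ carries \emph{no} arithmetic structure: $m^\alpha\bmod q$ is equidistributed for every $q$, so there are no rational major arcs, no residue--class pieces, and the Ionescu--Wainger machinery you cite is for polynomial phases and does not enter. The only ``major arc'' is the origin, and that is exactly what your smooth subtraction $\widetilde{\mathcal H}_s$ is supposed to absorb. So the honest claim you need is that $|\widehat{\mathcal R_s}(\xi)|\le Cs^{-\eta}$ \emph{uniformly for all} $\xi\in\mathbb T$; near $\xi=0$ this must come from the specific cancellation you built into $\widetilde{\mathcal H}_s$, and away from $0$ from van der Corput. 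That uniform bound is plausible but is precisely the hard analytic point, and you have not established it---the ``vanishing mean on gap intervals'' heuristic does not by itself control $\widehat{\mathcal R_s}$ on the intermediate range $s^{-1}\lesssim|\xi|\lesssim s^{-1+1/\alpha}$. A second, smaller issue: once you pass to Fourier cutoffs (your minor/major split) the pieces are no longer supported in $[-2s,2s]$, so the Christ interpolation step, which needs the dyadic support structure, has to be rephrased (e.g.\ run Christ's argument directly on $\mathcal R_s$ once the uniform Fourier decay is in hand, without any further frequency localisation).

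\medskip

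\textbf{Comparison.} If the uniform decay of $\widehat{\mathcal R_s}$ can be proved, your approach would give an alternative proof, conceptually closer to Christ's original homogeneous--kernel argument; its cost is the exponential--sum input. The paper's approach avoids Fourier analysis entirely at the price of the extra layer of decomposition of the bad function and the $\mathcal H_{s_1}*\mathcal H_{s_2}$ lemma. Both exploit the same quantitative feature---that $\hams$ lives on $\sim s^{1/\alpha}$ points---but the paper uses it as a counting bound on supports, while you would use it through the size of $\widehat{\mathcal H_s}$.
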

\begin{proof}
The theorem can be proved in a way similar to the proof of Theorem 1.1 of \cite{UZ}. We provide the necessary details.
Without loss of generality we can assume that $\theta<1$ is sufficiently
close to 1. (Fix large $A$.  Hilbert transform $\ham$ corresponding to  $\theta$ is a finite sum of Hilbert transforms
${\mathbb  H_{M^{\theta^\frac{j}{A}}}}$, $j\in \{0,1,...,A-1\}$ corresponding to $\theta^\frac{1}{A}$.)

For $f\in\ell^1,\ \lambda>0$ let $\{Q_j\}_j$ be the collection of Calder\'on-Zygmund cubes associated with level $\lambda$, that is the maximal dyadic cubes for which
\begin{equation*}
  \frac{1}{|Q|}\int_Q|f|>\lambda.
\end{equation*}
The collection is clearly pairwise disjoint, and from the definition of the cubes we have
\begin{equation}\label{app:2}
  \sum_j|Q^*_j|\le\frac{2}{\lambda}\cdot\|f\|_{\ell^1},
\end{equation}
where $Q^*_j$ is the immediate dyadic parent of $Q_j$.
We let
\begin{equation*}
  g=f\cdot\mathbbm{1}_{\{(\cup_jQ_j)^c\}},\qquad f_j=f\cdot \mathbbm{1}_{Q_j}.
\end{equation*}
We further let, for $k\in\zet$,
\begin{equation*}
  b_k=\sum_{j:|Q_j|=2^k}f_j,
\end{equation*}
and thus
\begin{equation*}
  f=g+\sum_kb_k,
\end{equation*}
with all summands having disjoint supports. We now further decompose $b_k$'s in the following way. Let
\begin{equation*}
  E_k(f)=\sum_{j:|Q_j|=2^k}\frac{\mathbbm{1}_{Q_j}}{|Q_j|}\int_{Q_j}f,\qquad E_k^{(s)}=E_k(f\cdot\mathbbm{1}_{\{|f|\le\lambda s^{1/\alpha}\}}),
\end{equation*}
and
\begin{equation*}
  b_k^{(s)}=b_k\cdot\mathbbm{1}_{\{|f|>\lambda s^{1/\alpha}\}},\qquad B_k^{(s)}=b_k-b_k^{(s)}-E_k^{(s)}(f).
\end{equation*}
The decomposition is thus as follows, for $s$ dyadic and $k\in\zet$
\begin{equation}\label{app:1}
  f=g+\sum_kb_k^{(s)}+\sum_kB_k^{(s)}+\sum_kE_k^{(s)}(f),
\end{equation}
where
\begin{equation*}
  B_k^{(s)}=b_k\cdot\mathbbm{1}_{\{|f|\le\lambda s^{1/\alpha}\}}-E_k^{(s)}(f).
\end{equation*}
We will consider each of the 4 components of $f$. Consider $g$. Since
\begin{equation*}
  {\rm supp }\,g\subset\Big(\bigcup_jQ_j\Big)^c
\end{equation*}
we have $|g|\le\lambda$. The operators $\ham$ are of strong type (2,2) (with bound uniform in $M$), so by Chebychev's inequality
\begin{equation*}
  \lambda^2\big|\{x:|\ham g(x)|>\lambda\}\big|\le\|\ham g\|_{\ell^2}^2\le C\,\|g\|_{\ell^2}^2\le C\, \lambda\|g\|_{\ell^1}\le C\,\lambda\|f\|_{\ell^1},
\end{equation*}
and consequently
\begin{equation*}
  \big|\{x:|\ham g(x)|>\lambda\}\big|\le \frac{C}{\lambda}\,\|f\|_{\ell^1}.
\end{equation*}
Now, consider $b_k^{(s)}$. By the definition \eqref{BT:13} of $\hams$ we have
\begin{equation*}
  \{x:\hams*b_k^{(s)}(x)\neq0\}\subset\bigcup_{s/2\le r^\alpha\le2s}\big([r^\alpha]+\{x:b_k^{(s)}(x)\neq0\}\big)\cup
  \big(-[r^\alpha]+\{x:b_k^{(s)}(x)\neq0\}\big),
\end{equation*}
so
\begin{align*}
  \big|\{x:\hams*b_k^{(s)}(x)\neq0\}\big| &\le2\sum_{s/2\le r^\alpha\le 2s}\big|\{x:b_k^{(s)}(x)\neq0\}\big| \\
  &\le Cs^{1/\alpha}\big|\{x:|b_k(x)|>\lambda s^{1/\alpha}\}\big|
\end{align*}
Thus
\begin{align*}
  &\bigg|\Big\{x:\Big|\sum_{\twoline{M^\theta\le s\le M}{s-\text{dyad.}}}\hams*\big(\sum_kb_k^{(s)}\big)(x)\Big|>0\Big\}\bigg|\le \\
  &\qquad\qquad\le\sum_k\sum_{\twoline{M^\theta\le s\le M}{s-\text{dyad.}}}\Big|\big\{x:\hams*b_k^{(s)}(x)\neq0\big\}\Big|\\
  &\qquad\qquad\le C\,\sum_k\sum_{\twoline{M^\theta\le s\le M}{s-\text{dyad.}}}\,s^{1/\alpha}\,\Big|\big\{x:|b_k(x)|>\lambda s^{1/\alpha}\big\}\Big|\\
  &\qquad\qquad= \frac{C}{\lambda}\,\sum_k\sum_{\twoline{M^\theta\le s\le M}{s-\text{dyad.}}}\,\lambda s^{1/\alpha}\,\Big|\big\{x:|b_k(x)|>\lambda s^{1/\alpha}\big\}\Big|\\
  &\qquad\qquad\le \frac{C}{\lambda}\,\sum_k\|b_k\|_{\ell^1}\\
  &\qquad\qquad\le \frac{C}{\lambda}\,\|f\|_{\ell^1},
\end{align*}
where the inequality before the last follows from the fact that the $s$'s are dyadic, while the last one follows from the disjointness of the supports.
We now turn to the component $B_k^{(s)}$. We use the following variant of Lemma 3.1 of \cite{PZ} (compare also Lemma 2.5 of \cite{UZ}).

\begin{lemma*} Suppose $M^\theta\le s_1\le s_2\le M$. Then for each $\delta>0$ there exists a $\gamma>0$ such that
\begin{gather}
  \mathcal H_{s_1}*\mathcal H_{s_2}(x)=\rho_{s_1,s_2}(x)\cdot\Big(\frac{s_1}{s_2}\Big)^\gamma+k_{s_1,s_2}(x)\qquad s_1<s_2,\\
  \hams*\hams(x)=\rho_{s,s}(x)+k_{s,s}(x)+\frac{1}{s^{1/\alpha}}\delta_0(x)\quad s=s_1=s_2,
\end{gather}
where
\begin{align}
  |\rho_{s_1,s_2}(x)|&\le \frac{C}{s_2}\\
  |\rho_{s_1,s_2}(x+h)-\rho_{s_1,s_2}(x)|&\le\frac{C}{s_2}\cdot\Big(\frac{|h|}{s_2}\Big)^\gamma\\
  |k_{s_1,s_2}(x)|&\le\frac{C}{s_2},\\
  {\rm supp}\,\rho_{s_1,s_2}&\subset[-Cs_2,Cs_2],\\
  {\rm supp}\,k_{s_1,s_2}&\subset[-Cs_2^{1-1/\alpha+\delta},Cs_2^{1-1/\alpha+\delta}].\label{kss_supp}
  \end{align}
\end{lemma*}
In fact, the term $k_{s_1,s_2}$ only appears when $s_1/s_2\sim1$. This observation follows
immediately by support considerations. The case $s_1=s_2$ was proved in \cite{UZ} and that proof
obviously extends to the general case. The support condition on $\rho$ was not mentioned explicitly
in \cite{UZ}, but it is obvious, considering the support of $\hams$. We have to estimate
\begin{equation*}
  \Big|\Big\{x:\Big|\sum_{\twoline{M^\theta\le s\le M}{s-\text{dyadic}}}\sum_k\hams*B_k^{(s)}(x)\Big|>\lambda\Big\}\Big|.
\end{equation*}
Note, that it is enough to restrict summation to $2^k<2s$. This is clear, since by support consideration
\begin{equation*}
  {\rm supp}\,\sum_{\twoline{s,k}{2^k\ge2s}}\hams*B_k^{(s)}\subset\bigcup_{j}Q^*_j,
\end{equation*}
and we can use the estimate \eqref{app:2}.
It then follows
\begin{align*}
  &\Big\|\sum_{\twoline{M^\theta\le s\le M}{s-\text{dyadic}}}\sum_{\twoline {k}{2^k\le2}}\hams*B_k^{(s)}\Big\|_{\ell^2}^2=\sum_{\twoline{M^\theta\le s_1,s_2\le M}{2^{k_1}\le s_1,2^{k_2}\le s_2}}\Big\langle \mathcal H_{s_1}*\mathcal H_{s_2}*B_{k_1}^{(s_1)},B_{k_2}^{(s_2)}\Big\rangle\\
  &\quad\qquad=\sum_{\twoline{M^\theta\le s\le M}{2^{k_1},2^{k_2}\le s}}\Big\langle \hams*\hams*B_{k_1}^{(s)},B_{k_2}^{(s)}\Big\rangle+\\
  &\qquad\qquad+2\Re\sum_{\twoline{M^\theta\le s_1<s_2\le M}{2^{k_1}\le s_1,2^{k_2}\le s_2}}\Big\langle \mathcal H_{s_1}*\mathcal H_{s_2}*B_{k_1}^{(s_1)},B_{k_2}^{(s_2)}\Big\rangle\\
  &\quad\qquad\le\sum_{\twoline{M^\theta\le s\le M}{2^{k_1},2^{k_2}\le s}}\Big(\Big|\Big\langle \rho_{s,s}*B_{k_1}^{(s)},B_{k_2}^{(s)}\Big\rangle\Big|+\Big|\Big\langle k_{s,s}*B_{k_1}^{(s)},B_{k_2}^{(s)}\Big\rangle\Big|+\frac{1}{s^{1/\alpha}}\,\big|\big\langle B_{k_1}^{(s)},B_{k_2}^{(s)}\big\rangle\big|\Big)+\\
  &\qquad\qquad+2\sum_{\twoline{M^\theta\le s_1<s_2\le M}{2^{k_1}\le s_1,2^{k_2}\le s_2}}\Big(\Big(\frac{s_1}{s_2}\Big)^\gamma\Big|\Big\langle \rho_{s_1,s_2}*B_{k_1}^{(s_1)},B_{k_2}^{(s_2)}\Big\rangle\Big|+\Big|\Big\langle k_{s_1,s_2}*B_{k_1}^{(s_1)},B_{k_2}^{(s_2)}\Big\rangle\Big|\Big)\\
\end{align*}
The estimate for each of the components is essentially the same. Let us consider
\begin{align*}
  \sum_{\twoline{M^\theta\le s\le M}{2^{k_1},2^{k_2}\le s}}\Big|\Big\langle \rho_{s,s}*B_{k_1}^{(s)},B_{k_2}^{(s)}\Big\rangle\Big|&=
  \sum_{\twoline{M^\theta\le s\le M}{2^k\le s}}\Big|\Big\langle \rho_{s,s}*B_k^{(s)},B_k^{(s)}\Big\rangle\Big|+\\
  &\quad+2\sum_{\twoline{M^\theta\le s\le M}{k_1<k_2,2^{k_2}\le s}}\Big|\Big\langle \rho_{s,s}*B_{k_1}^{(s)},B_{k_2}^{(s)}\Big\rangle\Big|.
\end{align*}
Again, the estimate for both sums is similar, and we outline the estimate for the first sum. By the definition, $B_k^{(s)}$ is supported on cubes $Q_j$ with $|Q_j|=2^k$
\begin{equation*}
  B_k^{(s)}=\sum_{j:|Q_j|=2^k}B_{k,Q_j}^{(s)}
\end{equation*}Thus
\begin{equation*}
  \rho_{s,s}*B_k^{(s)}(x)=\sum_{j:|Q_j|=2^k}\sum_{y\in\zet}\rho_{s,s}(x-y)B_{k,Q_j}^{(s)}(y).
\end{equation*}
Because of support considerations the outer sum can be restricted to these cubes, whose distance to $x$ is no greater than $Cs$
\begin{equation*}
  \rho_{s,s}*B_k^{(s)}(x)=\sum_{\twoline{j:\text{dist}(x,Q_j)\le Cs}{|Q_j|=2^k}}\sum_{y\in\zet}\rho_{s,s}(x-y)B_{k,Q_j}^{(s)}(y).
\end{equation*}
Each $B_{k,Q_j}^{(s)}$ has vanishing means so, denoting by $y_{Q_j}$ the center of cube $Q_j$,
\begin{align*}
  \big|\rho_{s,s}*B_k^{(s)}(x)\big| & = \Big|\sum_{\twoline{j:\text{dist}(x,Q_j)\le Cs}{|Q_j|=2^k}}\sum_{y\in\zet}\big(\rho_{s,s}(x-y)-\rho_{s,s}(x-y_{Q_j})\big)B_{k,Q_j}^{(s)}(y)\Big|\\
  &\le\sum_{\twoline{j:\text{dist}(x,Q_j)\le Cs}{|Q_j|=2^k}}\sum_{y\in\zet}\frac{C}{s}\,\Big(\frac{|y-y_{Q_j}|}{s}\Big)^\gamma\big|B_{k,Q_j}^{(s)}(y)\big|\\
  &\le C\,\sum_{\twoline{j:\text{dist}(x,Q_j)\le Cs}{|Q_j|=2^k}}\frac{1}{s}\,\Big(\frac{2^{k-1}}{s}\Big)^\gamma\big\|B_{k,Q_j}^{(s)}\big\|_{\ell^1}.
\end{align*}
By maximality of Calder\'on-Zygmund cubes
\begin{equation*}
  \big\|B_{k,Q_j}^{(s)}\big\|_{\ell^1}\le C\lambda|Q_j|.
\end{equation*}
Moreover, counting the number of dyadic cubes of size $2^k$ with distance to $x$ no greater than $Cs$ we can continue the above estimate
\begin{equation*}
  \le C\,\sum_{\twoline{j:\text{dist}(x,Q_j)\le Cs}{|Q_j|=2^k}}\frac{1}{s}\,\Big(\frac{2^{k-1}}{s}\Big)^\gamma\lambda |Q_j|\le C\,\Big(\frac{2^{k-1}}{s}\Big)^\gamma\lambda.
\end{equation*}
Thus
\begin{align*}
 \sum_{\twoline{M^\theta\le s\le M}{2^k\le s}}\Big|\Big\langle \rho_{s,s}*B_k^{(s)},B_k^{(s)}\Big\rangle\Big|&\le C\,\lambda
 \sum_{\twoline{M^\theta\le s\le M}{2^k\le s}}\Big(\frac{2^{k-1}}{s}\Big)^\gamma\big\|B_{k}^{(s)}\big\|_{\ell^1}\\
  & \le C\,\lambda\sum_{k\in\zet}\|f\|_{\ell^1(\cup_j\{Q_j:|Q_j|=2^k\})}\sum_{s\ge2^k}\Big(\frac{2^{k-1}}{s}\Big)^\gamma\\
  &\le C\,\lambda\,\|f\|_{\ell^1}.
\end{align*}
As already mentioned, all other components are estimated in a similar way. The functions
$k_{s_1,s_2}$ do not have the H\"older estimate, but the summation factor comes from the more
restricted support \eqref{kss_supp}, for the details see \cite{UZ}. Combining the above ideas, we
get
\begin{equation*}
  \Big\|\sum_{\twoline{M^\theta\le s\le M}{s-\text{dyadic}}}\sum_{\twoline {k}{2^k\le2}}\hams*B_k^{(s)}\Big\|_{\ell^2}^2\le
  C\,\lambda\,\|f\|_{\ell^1},
\end{equation*}
which, obviously, by the Chebychev's inequality implies the necessary estimate for the measure of the super-level set.

We now turn to the estimate of the last component $\sum E_k^{(s)}(f)$ in \eqref{app:1}. We use duality. Let $\|h\|_{\ell^2}=1$, and for $s\ge M^\theta$, dyadic let
\begin{equation*}
  \mathcal S_s=\sum_{\twoline{M^\theta\le \nu\le \min\{M,s\}}{\nu-\text{dyadic}}}\mathcal{H}_\nu
\end{equation*}
Summing by parts we have
\begin{align*}
&\Big|\Big<\sum_{\twoline{M^\theta\le s\le M}{s-\text{dyadic}}}\sum_k\hams*E_k^{(s)}(f),h\Big>\Big|^2\\
&\qquad=\Big|\Big<\sum_{\twoline{M^\theta\le s\le M}{s-\text{dyadic}}}\sum_k \mathcal S_s*(E_k^{(2s)}(f)-E_k^{(s)}(f)),h\Big>\Big|^2+\\
&\qquad\qquad+\Big|\Big<\sum_k \mathcal S_M*E_k^{(2M)}(f),h\Big>\Big|^2\\
&\qquad= I+II.
\end{align*}
We further estimate $I$. The $II$ part is estimated similarly.
\begin{align*}
I&=\Big|\sum_{\twoline{M^\theta\le s\le M}{s-\text{dyadic}}}\Big<\sum_k \sum_{j:|Q_j|=2^k}\frac{\mathbbm{1}_{Q_j}}{|Q_j|}\int_{Q_j} \big(\mathbbm{1}_{\{|f|\le\lambda (2s)^{1/\alpha}\}}-\mathbbm{1}_{\{|f|\le \lambda s^{1/\alpha}\}}\big)f,\mathcal S_s*h\Big>\Big|^2\\
&=\Big|\sum_{\twoline{M^\theta\le s\le M}{s-\text{dyadic}}}\Big<\sum_k \sum_{j:|Q_j|=2^k}\frac{\mathbbm{1}_{Q_j}}{|Q_j|}\int_{Q_j\cap\{\lambda s^{1/\alpha}<|f|\le \lambda (2s)^{1/\alpha}\}} f,\mathcal S_s*h\Big>\Big|^2\\
&\le\Big<\sum_{\twoline{M^\theta\le s\le M}{s-\text{dyadic}}}\sum_k \sum_{j:|Q_j|=2^k}\frac{\mathbbm{1}_{Q_j}}{|Q_j|}\int_{Q_j\cap\{\lambda s^{1/\alpha}<|f|\le \lambda (2s)^{1/\alpha}\}} |f|,\sup_{s-\text{dyadic}}|\mathcal S_s*h|\Big>^2\\
&\le\Big<\sum_k \sum_{j:|Q_j|=2^k}\frac{\mathbbm{1}_{Q_j}}{|Q_j|}\int_{Q_j} |f|,\sup_{s-\text{dyadic}}|\mathcal S_s*h|\Big>^2\\
&\le C\Big\|\sum_kE_k(|f|)\Big\|^2_{\ell^2},
\end{align*}
where in the last inequality we have used the strong $(2,2)$ type of
\begin{equation*}
  S^*(h)=\sup_{s-\text{dyadic}}|S_s*h|
\end{equation*}
which is well known as a folklore. It follows, since this operator can be controlled by a maximal,
with respect to truncations, CZ operator (in the sense of this paper), and a square function.
Further we observe
\begin{align*}
  \Big\|\sum_kE_k(|f|)\big\|_{\ell^2}^2 &=\sum_k\big\|E_k(|f|)\big\|_{\ell^2}^2\\
   &\le C\lambda\sum_k\big\|E_k(|f|)\big\|_{\ell^1},
\end{align*}
where the first equality follows since $E_k(|f|)$ have disjoint supports and in the last inequality
we have used the maximality of Calder\'on-Zygmund cubes used in the definition of $E_k(|f|)$. Consequently,
\begin{equation*}
  \Big\|\sum_kE_k(|f|)\Big\|^2_{\ell^2}\le C\lambda\|f\|_{\ell^1}.
\end{equation*}
This is the required estimate of the last component in the representation \eqref{app:1}, and thus the proof of the theorem is concluded.
\end{proof}

\normalsize

\end{document}